\numberwithin{equation}{section}
\newtheorem{theorem}{Theorem}[section]
\newtheorem{lemma}[theorem]{Lemma}
\newtheorem{definition}[theorem]{Definition}
\newtheorem{remark}[theorem]{Remark}
\newtheorem{proposition}[theorem]{Proposition}
\DeclareMathOperator{\dive}{div}
\DeclareMathOperator{\supp}{supp}
\DeclareMathOperator{\tr}{tr}
\DeclareMathOperator{\per}{per}
\DeclareMathOperator{\adj}{adj}
\DeclareMathOperator*{\essinf}{ess-inf}
\DeclareMathOperator{\Cof}{Cof}
\DeclareMathOperator{\se}{se}
\DeclareMathOperator{\vse}{vse}
\def\LL{\mathbb{L}}
\def\RR{\mathbb{R}}
\def\F{\mathscr{F}}
\def\H{\mathscr{H}}
\def\L{\mathscr{L}}
\def\Om{\Omega}
\def\ep{\varepsilon}
\title{Homogenization of weakly coercive integral functionals in three-dimensional elasticity}
\author{
M.~Briane\footnote{IRMAR \& INSA Rennes, mbriane@insa-rennes.fr},
A.~Pallares-Mart\'in\footnote{Dpto. de Ecuaciones Diferenciales y An\'alisis Num\'erico, Universidad de Sevilla, ajpallares@us.es} 
}
\begin{document}
\maketitle

\begin{abstract}
This paper deals with the homogenization through $\Gamma$-convergence of weakly coercive integral energies with the oscillating density $\LL(x/\ep)\nabla v:\nabla v$ in three-dimensional elasticity. The energies are weakly coercive in the sense where the classical functional coercivity satisfied by the periodic tensor $\LL$ (using smooth test functions $v$ with compact support in $\RR^3$) which reads as $\Lambda(\LL)>0$, is replaced by the relaxed condition $\Lambda(\LL)\geq 0$. Surprisingly, we prove that contrary to the two-dimensional case of~\cite{BF01} which seems {\em a priori} more constrained, the homogenized tensor $\LL^0$ remains strongly elliptic, or equivalently $\Lambda(\LL^0)>0$, for any tensor $\LL=\LL(y_1)$ satisfying $\LL(y)M:M+D:{\rm Cof}(M)\geq 0$, a.e. $y\in\RR^3$, $\forall\,M\in\RR^{3\times 3}$, for some matrix $D\in\RR^{3\times 3}$ (which implies $\Lambda(\mathbb{L})\geq 0$), and the periodic functional coercivity (using smooth test functions $v$ with periodic gradients) which reads as $\Lambda_{\rm per}(\LL)>0$. Moreover, we derive the loss of strong ellipticity for the homogenized tensor using a rank-two lamination, which justifies by $\Gamma$-convergence the formal procedure of~\cite{G01}.
\end{abstract}

\par\bigskip\noindent
{\bf\small Keywords:} Linear elasticity, ellipticity, homogenization, $\Gamma$-convergence, lamination
\par\smallskip\noindent
{\bf\small AMS subject classification:} 35B27, 74B05, 74Q15

\section{Introduction}
In this paper, for a bounded domain $\Omega$ of $\RR^3$ and for a periodic symmetric tensor-valued function $\LL=\LL(y)$, we study the homogenization of the elasticity energy
\begin{equation}\label{elasener}
v\in H^1_0(\Om;\RR^3)\mapsto\int_\Om\LL(x/\ep)\nabla v\cdot\nabla v\,dx\quad\mbox{as }\ep\to 0,
\end{equation}
especially when the tensor $\mathbb{L}$ is weakly coercive (see below).
It is shown in \cite{San,Francfort1} that for any periodic symmetric tensor-valued function $\LL=\LL(y)$ satisfying the functional coercivity, {\em i.e.}
\begin{equation}\label{FuncCoerc}
\Lambda(\LL):=\inf \left\{ \int_{\RR^3}\LL \nabla v : \nabla v\,dy,\ v\in C_c^\infty(\RR^3;\mathbb{R}^3),\ \int_{\RR^3}|\nabla v|^2\,dy = 1\right\} > 0,
\end{equation}
and for any $f\in H^{-1}(\Om;\RR^3)$, the elasticity system
\begin{equation}\label{syselas}
\left\{\begin{array}{cl}
-\,{\rm div}\big(\LL(x/\ep)\nabla u^\ep\big)=f & \mbox{in }\Om
\\*[1.mm]
u^\ep=0 & \mbox{on }\partial\Om,
\end{array}\right.
\end{equation}
H-converges as $\ep\to 0$ in the sense of Murat-Tartar \cite{Mur} to the elasticity system with the so-called homogenized tensor $\LL^0$ defined by
\begin{equation}\label{L0}
\LL^0M:M := \inf\left\{ \int_{Y_3} \LL(M+\nabla v):(M+\nabla v)\,dy,\ v\in H^1_{\per}(Y_3;\mathbb{R}^3) \right\}
\quad\mbox{for }M\in \mathbb{R}^{3\times 3}.
\end{equation}
Equivalently, under the functional coercivity \eqref{FuncCoerc} the energy \eqref{elasener} $\Gamma$-converges for the weak topology of $H^1_0(\Om;\RR^3)$ (see Definition~\ref{defGcon}) to the functional
\begin{equation}\label{GalimL0}
v\in H^1_0(\Om;\RR^3)\mapsto\int_\Om\LL^0\nabla v:\nabla v\,dx.
\end{equation}
The functional coercivity \eqref{FuncCoerc}, which is a nonlocal condition satisfied by the symmetric tensor $\LL$, is implied by the very strong ellipticity, {\em i.e.} the local condition
\begin{equation}\label{vseIntro}
\alpha_{\rm vse}(\LL):=\essinf_{y\in\RR^3}\big( \min\{ \LL(y)M:M, \ M\in \mathbb{R}^{3\times 3}_s,\ |M|=1\} \big)>0,
 \end{equation}
 and the converse is not true in general. Moreover, condition \eqref{FuncCoerc} implies the strong ellipticity, {\em i.e.}
\begin{equation}\label{seIntro}
\alpha_{\rm se}(\LL):=\essinf_{y\in\RR^3}\big( \min\{ \LL(y)(a\otimes b):(a\otimes b), \ a,b\in\RR^3,\ |a|=|b|=1 \}\big) > 0,
\end{equation}
but contrary to the scalar case, the converse is not true in general.
\par
Here, we focus on the case where the tensor $\LL$ is weakly coercive, {\em i.e.} relaxing the condition $\Lambda(\LL)>0$ by $\Lambda(\LL)\geq 0$.
In this case the homogenization of the elasticity system \eqref{syselas} associated with the energy \eqref{elasener} is badly posed in general, since one has no {\em a priori} $L^2$-bound on the stress tensor $\nabla u^\ep$ (assuming the existence of a solution $u^\ep$ to the elasticity system \eqref{syselas}) due to the loss of coercivity. However, it was shown by Geymonat {\em et al.} \cite{GMT} that the previous $\Gamma$-convergence result still holds when $\Lambda(\LL)\geq 0$, under the extra condition of periodic functional coercivity, {\em i.e.}
\begin{equation}\label{percoer}
\Lambda_{\per}(\LL):= \inf\left\{ \int_{Y_3} \LL \nabla v: \nabla v\,dy,\ v\in H^1_{\per}(Y_3;\mathbb{R}^3),\ \int_{Y_3} |\nabla v|^2 \, dy = 1\right\}>0.
\end{equation}
Furthermore, using the Murat-Tartar $1^*$-convergence for tensors which depend only on one direction (see \cite{Mur} in the conductivity case, see \cite[Section~3]{G01} and \cite[Lemma~3.1]{BF01} in the elasticity case) Guti\'errez \cite[Proposition~1]{G01} derived in two and three dimensions a $1$-periodic rank-one laminate with two isotropic phases whose tensor is
\begin{equation}\label{Gr1lam}
\LL_1(y_1)=\chi_1(y_1)\,\LL_a+\big(1-\chi(y_1)\big)\,\LL_b\quad\mbox{for }y_1\in\RR,
\end{equation}
which is strongly elliptic, {\em i.e.} $\alpha_{\rm se}(\LL)>0$, and weakly coercive, {\em i.e.} $\Lambda(\LL)\geq 0$, but such that the homogenized tensor~$\LL^0$ (in fact the homogenized tensor induced by $1^*$-convergence which is shown to agree with $\LL^0$ in the step~4 of the proof of Theorem~\ref{ThRankTwoLossStrEll}) is not strongly elliptic, {\em i.e.} $\alpha_{\se}(\LL^0)=0$. However, the $1^*$-convergence process used by Guti\'errez in \cite{G01} needs to have {\em a priori} $L^2$-bounds for the sequence of deformations, which is not compatible with the weak coercivity assumption. Therefore, Guti\'errez' approach is not a H-convergence process applied to the elasticity system \eqref{syselas}. Francfort and the first author \cite{BF01} obtained in dimension two a similar loss of ellipticity through a homogenization process using the $\Gamma$-convergence approach of \cite{GMT} from a more generic (with respect to \eqref{Gr1lam}) $1$-periodic isotropic tensor $\LL=\LL(y_1)$ satisfying
\begin{equation}\label{lossell}
\Lambda(\LL)=0,\quad \Lambda_{\rm per}(\LL)>0\quad\mbox{and}\quad\alpha_{\se}(\LL^0)=0.
\end{equation}
They also showed that Guti\'errez' lamination is the only one among rank-one laminates which implies such a loss of strong ellipticity.
\par\bigskip
The aim of the paper is to extend the result of \cite{BF01} to dimension three, namely justifying the loss of ellipticity of \cite{G01} by a homogenization process. The natural idea is to find as in \cite{BF01} a $1$-periodic isotropic tensor $\LL=\LL(y_1)$ satisfying \eqref{lossell}. 
Firstly, in order to check the relaxed functional coercivity $\Lambda(\LL)\geq 0$, we apply the translation method used in \cite{BF01}, which consists in adding to the elastic energy density a suitable null lagrangian such that the following pointwise inequality holds for some matrix $D\in\mathbb{R}^{3\times 3}$:
\begin{equation}\label{transmeth}
\LL M:M + D:\Cof(M)\ge0,\quad\forall\,M\in \mathbb{R}^{3\times 3}.
\end{equation}
Note that in dimension two the translation method reduces to adding the term $d\det(M)$ with one coefficient $d$, rather than a $(3\times 3)$-matrix $D$ in dimension three.
But surprisingly, and contrary to the two-dimensional case of~\cite{BF01}, we prove (see Theorem \ref{ThNoLossFirstLaminate}) that for any $1$-periodic tensor $\LL=\LL(y_1)$, condition \eqref{transmeth} combined with $\Lambda_{\rm per}(\LL)>0$ actually implies that $\alpha_{\se}(\LL^0)>0$, making impossible the loss of ellipticity through homogenization. This specificity was already observed by Guti\'errez \cite{G01} in the particular case of isotropic two-phase rank-one laminates~\eqref{Gr1lam}, where certain regimes satisfied by the Lam\'e coefficients of the isotropic phases $\LL_a,\LL_b$ are not compatible with the desired equality $\alpha_{\se}(\LL^0)=0$.
\par
To overcome this difficulty Guti\'errez \cite{G01} considered a rank-two laminate obtained by mixing in the direction $y_2$ the homogenized tensor $\LL_1^*$ of $\mathbb{L}_1(y_1)$ defined by \eqref{Gr1lam}, with a very strongly elliptic isotropic tensor $\LL_c$. In the present context we derive a similar loss of ellipticity by rank-two lamination, but justifying it through homogenization still using a $\Gamma$-convergence procedure (see Theorem \ref{ThRankTwoLossStrEll}). However, the proof is rather delicate, since we have to choose the isotropic materials $a,b,c$ so that the $1$-periodic rank-one laminate tensor $\LL_2$ in the direction $y_2$ obtained after the first rank-one lamination of $\LL_a,\LL_b$ in the direction $y_1$, namely
\begin{equation}\label{r2lam}
\LL_2(y_2)=\chi_2(y_2)\,\LL_1^*+\big(1-\chi_2(y_2)\big)\,\LL_c\quad\mbox{for }y_2\in\RR,
\end{equation}
satisfies
\begin{equation}\label{lossell2}
\Lambda(\LL_2)\geq 0\quad\mbox{and}\quad\alpha_{\se}(\LL_2^0)=0,
\end{equation}
where $\LL_2^0$ is the homogenized tensor defined by formula \eqref{L0} with $\LL=\LL_2$. Moreover, the condition $\Lambda(\LL_2)\geq 0$ without $\Lambda_{\rm per}(\LL_2)>0$ (which seems very intricate to check) needs to extend the $\Gamma$-convergence result of \cite[Theorem~3.1(i)]{GMT}. However, Braides and the first author have proved (see Theorem~\ref{BBResult}) that the $\Gamma$-convergence result for the energy \eqref{elasener} holds true under the sole condition $\Lambda(\LL)\geq 0$.
\par\bigskip
The paper is divided in two sections. In the first section we prove the $\Gamma$-convergence result for \eqref{elasener} under the assumption $\Lambda(\mathbb{L})\ge0$, and without the condition $\Lambda_{\rm per}(\LL)>0$. The second section is devoted to the main results of the paper: In Section~\ref{SubSecRank1Lam} we prove the strong ellipticity of the homogenized tensor $\LL^0$ for any isotropic tensor $\LL=\LL(y_1)$ satisfying both the two conditions \eqref{transmeth} (which implies $\Lambda(\LL)\geq 0$) and $\Lambda_{\rm per}(\LL)>0$. In Section~\ref{SubSecRank2Lam} we show the loss ellipticity by homogenization using a suitable rank-two laminate tensor $\LL_2$ of type \eqref{r2lam}, and the $\Gamma$-convergence result under the sole condition $\Lambda(\LL_2)\geq 0$. Finally, the Appendix is devoted to the proof of Theorem~\ref{LamdaPer>0}.

\subsubsection*{Notations}
\begin{itemize}
\item The space dimension is denoted by $N\geq 2$, but most of the time it will be $N=3$.

\item $\mathbb{R}_s^{N\times N}$ denotes the set of the symmetric matrices in $\mathbb{R}^{N\times N}$.

\item $I_N$ denotes the identity matrix of $\mathbb{R}^{N\times N}$.

\item For any $M\in\RR^{N\times N}$, $M^T$ denotes the transposed of $M$, and $M^s$ denotes the symmetrized matrix of $M$.

\item $:$ denotes the Frobenius inner product in $\mathbb{R}^{N\times N}$, {\em i.e.} $M:M':=\tr(M^TM')$ for $M,M'\in\RR^{N\times N}$.

\item $\L_s(\mathbb{R}^{N\times N})$ denotes the space of the symmetric tensors $\LL$ on $\mathbb{R}^{N\times N}$ satisfying
\[
\LL M=\LL M^s\in\mathbb{R}_s^{N\times N}\quad\mbox{and}\quad \LL M:M'=\LL M':M,
\quad\forall\,M,M'\in\mathbb{R}_s^{N\times N}.
\]
In terms of the entries $\LL_{ijkl}$ of $\LL$, this is equivalent to $\LL_{ijkl}=\LL_{jikl}=\LL_{klij}$ for any $i,j,k,l\in\{1,\dots,N\}$.

\item $\mathbb{I}_s$ denotes the unit tensor of $\L_s(\mathbb{R}^{N\times N})$ defined by $\mathbb{I}_s M:=M^s$ for $M\in\RR^{N\times N}$.

\item $M_{ij}$ denotes the $(i,j)$ entry of the matrix $M\in\mathbb{R}^{N\times N}$.

\item $\tilde{M}^{ij}$ denotes the $(N\!-\!1)\times(N\!-\!1)$-matrix resulting from deleting the $i$-th row and the $j$-th column of the matrix $M\in\mathbb{R}^{N\times N}$ for $i,j\in\{1,\dots,N\}$.

\item ${\Cof} (M)$ denotes the cofactors matrix of $M\in \mathbb{R}^{N\times N}$, {\em i.e.} the matrix with entries
$({\Cof} M)_{ij} = (-1)^{i+j} \det(\tilde{M}^{ij})$ for $i,j\in\{1,\dots,N\}$.
   
\item $\adj(M)$ denotes the adjugate matrix of $M\in\mathbb{R}^{N\times N}$, {\em i.e.} $\adj(M) = (\Cof M)^T$.

\item $Y_N:=[0,1)^N$ denotes the unit cube of $\RR^N$.

\end{itemize}
Let $\LL\in L^\infty_{\per}\big(Y_N;\L_s(\mathbb{R}^{N\times N})\big)$ be a $Y_N$-periodic symmetric tensor-valued function.
In the whole paper we will use the following ellipticity constants related to the tensor $\LL$ (see \cite[Section~3]{GMT} for further details):
    \begin{itemize}
    \item $\alpha_{\se}(\LL)$ denotes the best ellipticity constant for $\LL$, {\em i.e.}
    \begin{equation*}
    \alpha_{\se}(\LL):= \essinf_{y\in Y_N}\big( \min\{ \LL(y)(a\otimes b):(a\otimes b), \ a,b\in \mathbb{R}^{N},\ |a|=|b|=1 \} \big).
    \end{equation*} 
    \item $\alpha_{\vse}(\LL)$ denotes the best constant of very strong ellipticity of $\LL$, {\em i.e.}
    \begin{equation*}
    \alpha_{\vse}(\LL):=  \essinf_{y\in Y_N}\big( \min\{ \LL(y)M:M, \ M\in \mathbb{R}^{N\times N}_s, |M|=1 \} \big).
    \end{equation*}
    \item $\Lambda(\LL)$ denotes the global functional coercivity constant for $\LL$, {\em i.e.}
    \begin{equation*}\label{DefLambda}
\Lambda (\LL) := \inf \left\{ \int_{\mathbb{R}^N} \LL \nabla v : \nabla v\,dy,\ v\in C_c^\infty(\mathbb{R}^N;\mathbb{R}^N),\ \int_{\mathbb{R}^N} |\nabla v|^2\,dy = 1 \right\}.
\end{equation*}
\item $\Lambda_{\per}(\LL)$ denotes the functional coercivity constant of $\LL$ with respect to $Y_N$-periodic deformations, {\em i.e.}
\begin{equation*}
\Lambda_{\per}(\LL) := \inf\left\{ \int_{Y_N} \LL \nabla v: \nabla v\,dy,\ v\in H^1_{\per}(Y_N;\mathbb{R}^N),\ \int_{Y_N} |\nabla v|^2 \, dy = 1\right\}.
\end{equation*}
\end{itemize}
\begin{remark}\label{rem.ell}
\noindent
\begin{itemize}
\item The very strong ellipticity implies the strong ellipticity, {\em i.e.} for any tensor $\LL$,
\[
\alpha_{\rm vse}(\LL)>0\;\Rightarrow\;\alpha_{\rm se}(\LL)>0.
\]
\item According to \cite[Theorem~3.3(i)]{GMT}, if $\alpha_{\rm se}(\LL)>0$, then the following inequalities hold:
\begin{equation}\label{inecoer}
\Lambda (\LL)\leq\Lambda_{\rm per}(\LL)\leq\alpha_{\rm se}(\LL).
\end{equation}
\item Using a Fourier transform we get that for any constant tensor $\LL_0$,
\[
\alpha_{\rm se}(\LL_0)>0\;\Leftrightarrow\;\Lambda(\LL_0)>0.
\]
\end{itemize}
\end{remark}
In the sequel will always assume the strong ellipticity of the tensor $\LL$, {\em i.e.} $\alpha_{\se}(\LL)>0$.
\par
We conclude this section with the definition of $\Gamma$-convergence of a sequence of functionals (see, {\em e.g.}, \cite{Dal,Bra}):
\begin{definition}\label{defGcon}
Let $X$ be a reflexive Banach space endowed with the metrizable weak topology on bounded sets of $X$, and let $\F^\ep:X\to\RR$ be a $\ep$-indexed sequence of functionals. The sequence $\F^\ep$ is said to $\Gamma$-converge to the functional $\F^0:X\to\RR$ for the weak topology of $X$, and we denote $\F^\ep\stackrel{\Gamma-X}{\rightharpoonup}\F^0$, if for any $u\in X$,
\begin{itemize}
\item $\forall\,u_\ep\rightharpoonup u$, $\displaystyle \F^0(u)\leq\liminf_{\ep\to 0}\F^\ep(u_\ep)$,
\item $\exists\,\bar{u}_\ep\rightharpoonup u$, $\displaystyle \F^0(u)=\lim_{\ep\to 0}\F^\ep(\bar{u}_\ep)$.
\end{itemize}
Such a sequence $\bar{u}_\ep$ is called a recovery sequence.
\end{definition}

\section{The $\Gamma$-convergence results}\label{SecGammaConvResults}
It is stated in \cite[Ch.~6, Sect.~11]{San} that the first homogenization result in linear elasticity can be found in the Duvaut work (unavailable reference). It claims that if the tensor $\LL$ is very strongly elliptic, {\em i.e.} $\alpha_{\vse}(\LL)>0$, then the solution $u^\varepsilon\in H^1_0(\Omega;\mathbb{R}^3)$ to the elasticity system \eqref{syselas} satisfies
\begin{equation}
\left\{
\begin{aligned}
& u^\varepsilon\rightharpoonup u\quad \mbox{weakly in }H^1_0(\Omega;\mathbb{R}^3),\\
&\LL^\varepsilon\nabla u^\varepsilon \rightharpoonup \LL^0\nabla u\quad  \mbox{weakly in }L^2(\Omega;\mathbb{R}^{3\times 3}),\\
& -\dive(\LL^0\nabla u ) = f,\quad
\end{aligned}
\right.
\end{equation}
where $\LL^0$ is given by
\begin{equation}\label{DefL0}
\LL^0M:M := \inf\left\{ \int_{Y_3} \LL(M+\nabla v):(M+\nabla v)\,dy,\ v\in H^1_{\per}(Y_3;\mathbb{R}^3) \right\}
\quad\mbox{for }M\in \mathbb{R}^{3\times 3},
\end{equation}
which is attained when $\Lambda_{\per}(\LL)>0$.
The previous homogenization result actually holds under the weaker assumption of functional coercivity, {\em i.e.} $\Lambda(\LL)>0$, as shown in \cite{Francfort1}.
\par
Otherwise, from the point of view of the elastic energy consider the functionals
\begin{align}
\F^\varepsilon(v) & := \int_{\Omega} \LL({x / \varepsilon})\nabla v:\nabla v\,dx, \label{DefFepsilon}\\
\F^0(v) & := \int_\Omega \LL^0\nabla v : \nabla v\,dx\label{DefF0}\quad\mbox{for }v\in H^1(\Omega,\mathbb{R}^3).
\end{align} 
Then, the following homogenization result \cite[Theorem 3.4(i)]{GMT} through the $\Gamma$-convergence of energy \eqref{DefFepsilon}, allows us to relax the very strong ellipticity of $\LL$.
\begin{theorem}[Geymonat {\em et al.} \cite{GMT}]\label{ThHomogGammaConv}
Under the conditions
\begin{equation*}
\Lambda(\LL) \ge0\quad\mbox{and}\quad\Lambda_{\per}(\LL)>0,
\end{equation*}
one has 
\begin{equation*}
\F^\varepsilon \stackrel{\Gamma-H^1_0(\Omega;\mathbb{R}^3)}\rightharpoonup \F^0,
\end{equation*}
for the weak topology of $H^1_0(\Omega;\mathbb{R}^3)$, where $\LL^0$ is given by \eqref{DefL0}.
\end{theorem}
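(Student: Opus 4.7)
The plan is the classical two-sided $\Gamma$-convergence argument, combining the existence of periodic correctors (ensured by $\Lambda_{\per}(\LL)>0$) with the relaxed coercivity on zero-trace deformations that follows from $\Lambda(\LL)\geq 0$: by scaling and a density argument, this assumption reads as
\[
\int_\Om \LL(x/\ep)\,\nabla\psi:\nabla\psi\,dx\;\geq\;0\qquad\forall\,\psi\in H^1_0(\Om;\RR^3),\ \forall\,\ep>0.
\]

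\textbf{Upper bound.} For each $M\in\RR^{3\times 3}$, the cell functional $\phi\mapsto\int_{Y_3}\LL(y)(M+\nabla\phi):(M+\nabla\phi)\,dy$ is coercive on $H^1_{\per}(Y_3;\RR^3)/\RR^3$ thanks to $\Lambda_{\per}(\LL)>0$, so the direct method produces a minimizer $\phi_M$ depending linearly on $M$ with minimum value $\LL^0M:M$. Given a piecewise affine $v\in H^1_0(\Om;\RR^3)$ with $\nabla v\equiv M_k$ on each Lipschitz piece $\Om_k$, set
\[
\bar v_\ep(x)=v(x)+\ep\,\theta_\ep(x)\,\phi_{M_k}(x/\ep)\quad\mbox{on }\Om_k,
\]
with a smooth cutoff $\theta_\ep$ equal to one away from $\partial\Om_k\cup\partial\Om$ on a scale chosen so that $\bar v_\ep\in H^1_0(\Om;\RR^3)$ and $\bar v_\ep\rightharpoonup v$ in $H^1_0$. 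Periodic averaging of the density $\LL(y)(M_k+\nabla\phi_{M_k}(y)):(M_k+\nabla\phi_{M_k}(y))$ then yields $\F^\ep(\bar v_\ep)\to\F^0(v)$. Extension from piecewise affine maps to general $v\in H^1_0(\Om;\RR^3)$ follows by density, by continuity of $v\mapsto\F^0(v)$ on $H^1_0$, and by a diagonal extraction.

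\textbf{Lower bound.} Let $v_\ep\rightharpoonup v$ in $H^1_0(\Om;\RR^3)$ with $\liminf\F^\ep(v_\ep)<\infty$, and let $\bar v_\ep$ be the recovery sequence for $v$. Since $v_\ep-\bar v_\ep\in H^1_0(\Om;\RR^3)$, the displayed relaxed coercivity applied to $\psi_\ep=v_\ep-\bar v_\ep$, together with the symmetry of $\LL\in\L_s(\RR^{3\times 3})$, gives
\[
\F^\ep(v_\ep)\;\geq\;2\int_\Om\LL(x/\ep)\,\nabla\bar v_\ep:\nabla v_\ep\,dx\;-\;\int_\Om\LL(x/\ep)\,\nabla\bar v_\ep:\nabla\bar v_\ep\,dx.
\]
The last term converges to $\F^0(v)$ by construction. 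For the cross term, set $\sigma_\ep:=\LL(x/\ep)\,\nabla\bar v_\ep$; periodic averaging yields $\sigma_\ep\rightharpoonup\LL^0\nabla v$ weakly in $L^2(\Om;\RR^{3\times 3})$, while the cell equation $\dive_y[\LL(y)(M+\nabla\phi_M(y))]=0$, combined with the cutoff control, ensures that $\dive\sigma_\ep$ is relatively compact in $H^{-1}(\Om;\RR^3)$. Integrating by parts using $v_\ep\in H^1_0$ and the Rellich strong convergence $v_\ep\to v$ in $L^2$ produces
\[
\int_\Om\sigma_\ep:\nabla v_\ep\,dx\;=\;-\langle\dive\sigma_\ep,v_\ep\rangle_{H^{-1}\!,H^1_0}\;\longrightarrow\;-\langle\dive(\LL^0\nabla v),v\rangle=\F^0(v),
\]
and $\liminf_\ep\F^\ep(v_\ep)\geq 2\F^0(v)-\F^0(v)=\F^0(v)$ follows.

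\textbf{Main obstacle.} The core difficulty is the absence of any pointwise sign for $\LL$: every error introduced in the construction (boundary layers at $\partial\Om$ and across affine interfaces in $\bar v_\ep$, and the full difference $v_\ep-\bar v_\ep$ in the lower bound) must remain in $H^1_0(\Om;\RR^3)$ so as to be absorbed by the inequality $\int\LL(x/\ep)\nabla\psi:\nabla\psi\geq 0$. This rigidly forces all approximations into the zero-trace class and rules out the cleaner localization or periodic-extension arguments available under very strong ellipticity; the technical heart of the proof lies in balancing the cutoff scale against $\ep\,\|\phi_{M_k}\|_{H^1_{\per}}$ and in establishing the $H^{-1}$-compactness of $\dive\sigma_\ep$ that drives the div--curl style passage to the limit in the cross term.
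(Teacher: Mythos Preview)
The paper does not supply its own proof of this theorem: it is quoted verbatim as \cite[Theorem~3.4(i)]{GMT} and used as a black box (and later strengthened in Theorem~\ref{BBResult}). So there is no in-paper argument to compare your proposal against.

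That said, your outline is essentially the strategy of the original Geymonat--M\"uller--Triantafyllidis proof: correctors from $\Lambda_{\per}(\LL)>0$ for the $\limsup$, and for the $\liminf$ the quadratic expansion around a recovery sequence together with the global inequality $\int_\Om\LL(x/\ep)\nabla\psi:\nabla\psi\,dx\geq 0$ for $\psi\in H^1_0$, reducing matters to a compensated-compactness passage in the cross term. Two technical points deserve more care than you give them. First, the $H^{-1}$-compactness of $\dive\sigma_\ep$ is \emph{not} automatic for the cutoff recovery sequence you describe: on the layer where $\theta_\ep\equiv 0$ one has $\sigma_\ep=\LL(x/\ep)M_k$, whose divergence is generically of size $\ep^{-1}$, so the cutoff must be placed so that the ``bad'' region still carries a divergence-free stress (e.g.\ work with $M_kx+\ep\phi_{M_k}(x/\ep)$ throughout and put the cutoff on the corrector contribution only at $\partial\Om$, or use smooth $v$ and the two-scale ansatz $v+\ep\sum_{ij}\partial_jv_i\,\phi^{ij}(\cdot/\ep)$). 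Second, in the convergence $\langle\dive\sigma_\ep,v_\ep\rangle\to\langle\dive(\LL^0\nabla v),v\rangle$ the operative fact is strong $H^{-1}$ convergence of $\dive\sigma_\ep$ paired with the weak $H^1_0$ convergence of $v_\ep$; the Rellich $L^2$ convergence you invoke is not the right pairing. With these adjustments the scheme goes through, and it is the same scheme as in \cite{GMT}.
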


\subsection{Generic examples of tensors satisfying $\Lambda(\LL)\geq 0$ and $\Lambda_{\rm per}(\LL)>0$}\label{SubSecApplicability}
Reference \cite{BF01} provides a class of isotropic strongly elliptic tensors for which Theorem \ref{ThHomogGammaConv} applies. However, this work is restricted to dimension two. We are going to extend the result \cite[Theorem 2.2]{BF01} to dimension three.
\par
Let us assume that there exist $p>0$ phases $Z_i,\, i = 1,\dots,p$ satisfying
\begin{equation}\label{HipZ_i}
\left\{
\begin{aligned}
& Z_i\mbox{ is open, connected and Lipschitz for any } i\in\{1,\dots,p\},	\\
& Z_i\cap Z_j = {\rm\O}\quad \forall\, i\neq j\in\{1,\dots,p\}, \\
&  \overline{Y}_3 = \bigcup_{i=1}^p \overline{Z}_i,
\end{aligned}
\right.
\end{equation}
such that the tensor $\LL$ satisfies
\begin{equation}\label{generalL}
\left\{
 \begin{array}{l}
 \LL(y) M = \lambda(y) \tr (M) I_3 + 2\mu(y)M,\quad \forall\, y\in Y_3,\forall\, M\in \mathbb{R}^{3\times 3}_s,\\
 \lambda(y)=\lambda_i,\ \mu(y)=\mu_i \mbox{ in }Z_i,\quad \forall\, i\in \{1,\dots,p\},\\
 \mu_i>0,\ 2\mu_i+\lambda_i>0,\quad \forall\, i\in \{1,\dots,p\}.
 \end{array}
\right.
\end{equation}
We further assume the existence of $d>0$ such that
\begin{equation}\label{HipExConstd}
-\min_{i=1,\dots,p}\{2\mu_i+3\lambda_i\} \le d \le 4\min_{i=1,\dots,p}\{\mu_i\}.
\end{equation}
Now, we define the following subsets of indexes
\begin{equation}\label{indexesL}
\left\{
\begin{array}{l}
I := \{i\in\{1,\dots,p\}:\ d = 4\mu_i\}, \\
J := \{ j\in\{1,\dots,p\}:\ 2\mu_j+3\lambda_j = -d \}, \\
K := \{ 1,\dots,p \} \setminus (I\cup J).
\end{array}
\right.
\end{equation}
Note that the three previous sets are disjoint. This is true, since we have $4\mu_i > -(2\mu_i+3\lambda_i)$ due to $2\mu_i+\lambda_i>0$.
\par
In this framework, we are able to prove the following theorem which is an easy extension of the two-dimensional result of \cite[Theorem 2.2]{BF01}. For the reader convenience the proof is given in the Appendix.

\begin{theorem}\label{LamdaPer>0}
Let $\LL$ be the tensor defined by \eqref{generalL} and \eqref{HipExConstd}. Then we have $\Lambda(\LL)\ge 0$.
We also have $\Lambda_{\per}(\LL) > 0$ provided that one of the two following conditions is fulfilled by the sets defined in \eqref{indexesL}:
\begin{enumerate}[{Case} 1.]
\item For each $j\in J$, there exist intervals $(a_j^-,a_j^+),(b_j^-,b_j^+)\subset [0,1]$ such that
\begin{align*}
	& (a_j^-,a_j^+)\times(b_j^-,b_j^+)\times\{0,1\}\subset \partial Z_j, \quad\mbox{or} \\
	& (a_j^-,a_j^+)\times\{0,1\}\times(b_j^-,b_j^+)\subset \partial Z_j, \quad\mbox{or} \\
	& \{0,1\}\times(a_j^-,a_j^+)\times(b_j^-,b_j^+)\subset \partial Z_j.
\end{align*}

\item For each $j\in J$, there exists $k\in K$ with $\H^2(\partial Z_j\cap \partial Z_k) >0$, where $\H^2$ denotes the 2-dimensional Hausdorff measure.
\end{enumerate}
\end{theorem}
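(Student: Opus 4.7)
The plan is to prove the two assertions separately, extending the 2D translation-method argument of \cite[Theorem 2.2]{BF01} to 3D with the trace of the cofactor matrix playing the role of the 2D determinant.

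For $\Lambda(\LL)\ge 0$, I would apply the translation method with $D=d\,I_3$ in \eqref{transmeth}. Using the 3D identity $\tr(\Cof M)=\frac{1}{2}\bigl[(\tr M)^2-\tr(M^2)\bigr]$ together with the orthogonal decomposition $M=\frac{\tr M}{3}I_3+M_0^s+M^a$ (trace, deviatoric-symmetric and antisymmetric parts), a direct computation yields
\begin{equation*}
\LL(y)M:M+d\,\tr(\Cof M)=\frac{3\lambda(y)+2\mu(y)+d}{3}(\tr M)^2+\Bigl(2\mu(y)-\frac{d}{2}\Bigr)|M_0^s|^2+\frac{d}{2}|M^a|^2,
\end{equation*}
and the three coefficients are nonnegative on every phase $Z_\ell$ thanks to \eqref{HipExConstd} (the bound $d\le 4\mu_\ell$ controls the deviatoric coefficient, $d\ge-(2\mu_\ell+3\lambda_\ell)$ controls the trace coefficient, and $d>0$ the skew). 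Since each entry of $\Cof(\nabla v)$ is a $2\times 2$ Jacobian minor, hence a null Lagrangian, $\int_{\RR^3}\tr(\Cof\nabla v)\,dy=0$ for every $v\in C_c^\infty(\RR^3;\RR^3)$, and the pointwise nonnegativity then gives $\Lambda(\LL)\ge 0$.

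For $\Lambda_{\per}(\LL)>0$ I would argue by contradiction. Since $\int_{Y_3}\tr(\Cof\nabla v)\,dy=0$ also for $Y_3$-periodic $v$, the same identity makes $F(v):=\int_{Y_3}\LL\nabla v:\nabla v\,dy$ nonnegative and weakly lower semicontinuous on $H^1_{\per}(Y_3;\RR^3)$. Assuming $\Lambda_{\per}(\LL)=0$, the direct method supplies a mean-zero $v\in H^1_{\per}$ with $F(v)=0$ and $\int_{Y_3}|\nabla v|^2\,dy=1$; pointwise vanishing of the nonnegative integrand then constrains $\nabla v$ phase by phase: $\nabla v=0$ on $Z_k$ for $k\in K$, whence $v$ is a constant vector $\mathbf{c}_k$; $\nabla v=f\,I_3$ on $Z_j$ for $j\in J$, and a row-by-row curl-free argument forces $f$ to be a constant scalar $c_j\in\RR$, hence $v(y)=c_j\,y+\mathbf{a}_j$; and $\nabla v$ is symmetric trace-free on $Z_i$ for $i\in I$, so locally $v=\nabla\phi_i$ with $\phi_i$ harmonic. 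The hypothesis on $J$ then kills each $c_j$: in Case~1, the $Y_3$-periodicity of $v$ identifies opposite faces of $Y_3$ and the trace of $c_j\,y+\mathbf{a}_j$ on the two identified faces differs by $c_j$ in the appropriate component, forcing $c_j=0$; in Case~2, trace continuity across $\partial Z_j\cap\partial Z_k$, a set of positive $\H^2$-measure, equates the affine function $c_j\,y+\mathbf{a}_j$ with the constant $\mathbf{c}_k$ on a $2$-dimensional set, again forcing $c_j=0$.

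Thus $v$ is piecewise constant on $\bigcup_{\ell\in J\cup K}Z_\ell$. On each $Z_i$ ($i\in I$), $v=\nabla\phi_i$ with $\phi_i$ harmonic and the trace of $v$ on $\partial Z_i\cap\partial Z_\ell$ equal to the neighbour's constant $\mathbf{c}_\ell$, so a trace-matching and harmonic-extension argument forces $\phi_i$ to be affine and $v$ constant on $Z_i$. Connectedness of the torus plus the mean-zero condition then give $v\equiv 0$. To obtain the final contradiction with $\|\nabla v_n\|_{L^2}=1$, I would upgrade the weak convergence $\nabla v_n\rightharpoonup 0$ to strong in $L^2$: the skew part $(\nabla v_n)^a\to 0$ strongly since its coefficient $\frac{d}{2}$ is uniformly positive, and a compensated-compactness argument combined with Korn's inequality on $H^1_{\per}$ modulo rigid motions upgrades the remaining phase-localised components to strong convergence, yielding $\|\nabla v_n\|_{L^2}\to 0$, a contradiction. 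The main obstacle is the propagation of constancy from $\bigcup_{\ell\in J\cup K}Z_\ell$ into the phases $Z_i$, where $\nabla v$ is only symmetric trace-free and one must carefully combine the harmonic structure with piecewise constant Dirichlet data across possibly intricate boundary configurations; a related technical hurdle is the Korn-type strong-convergence upgrade for $\nabla v_n$ using compensated compactness to handle the "soft" directions $\dive v_n$ on $Z_j$ and $(\nabla v_n)_0^s$ on $Z_i$ that are not controlled by $F(v_n)$.
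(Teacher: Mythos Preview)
Your argument for $\Lambda(\LL)\ge 0$ is correct and equivalent to the paper's.

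For $\Lambda_{\per}(\LL)>0$ there is a genuine gap in the $I$-phase step, which you rightly flag as ``the main obstacle'' but do not resolve. The harmonic-extension idea fails: even after forcing every $c_j=0$, the trace of $v$ on $\partial Z_i$ is only \emph{piecewise} constant (different constants on interfaces with different neighbours), and piecewise constant Dirichlet data for $\nabla\phi_i$ with $\phi_i$ harmonic does not make $\phi_i$ affine; moreover nothing in the hypotheses excludes two $I$-phases from sharing boundary, where you have no trace information at all. The paper avoids this entirely by working with the minimizing sequence $v^n$ throughout and using a null-Lagrangian \emph{subtraction}. The algebraic identity
\[
(\dive v)^2+\sum_{r<q}(\partial_r v_q-\partial_q v_r)^2=|\nabla v|^2+2\,\tr\bigl(\Cof(\nabla v)\bigr)
\]
shows that on $Z_i$ ($i\in I$, where the deviatoric coefficient $2\mu_i-d/2$ vanishes) the translated integrand controls exactly the left-hand side. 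Once $\nabla v^n\to 0$ \emph{strongly} in $L^2$ on the $J$- and $K$-phases, the periodic null-Lagrangian property $\int_{Y_3}\tr(\Cof\nabla v^n)\,dy=0$ forces $\sum_{i\in I}\int_{Z_i}\tr(\Cof\nabla v^n)\,dy\to 0$ (the cofactor entries are products of $L^2$-strongly convergent factors on the complement), and subtracting yields $\sum_{i\in I}\int_{Z_i}|\nabla v^n|^2\,dy\to 0$ directly --- no trace-matching, no harmonic analysis.

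This trick requires strong (not merely weak) convergence of $\nabla v^n$ on the $J$-phases as input, and your ``compensated compactness plus Korn'' sketch is too vague to provide it. The paper's concrete tool is Ne\v{c}as's lemma (equivalence of $\|\cdot\|_{L^2(Z_j)}$ and $\|\cdot\|_{H^{-1}(Z_j)}+\|\nabla\cdot\|_{H^{-1}(Z_j)}$): from $\partial_r v_q^n\to 0$ in $L^2(Z_j)$ for $r\neq q$ and $\partial_r v_r^n-\partial_q v_q^n\to 0$ in $L^2(Z_j)$ one reads off $\nabla(\partial_1 v_1^n)\to 0$ in $H^{-1}(Z_j)$; together with the $L^2$-bound this forces $\partial_1 v_1^n$ to converge strongly in $L^2(Z_j)$ to a constant $c_j$, hence $\nabla v^n\to c_j I_3$ strongly in $L^2(Z_j)$, after which Cases~1--2 give $c_j=0$ exactly as you argue for the limit.
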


\subsection{Relaxation of condition $\Lambda_{\rm per}(\LL)>0$}\label{SubSecRelaxationLambdaPer}

According to Theorem \ref{ThHomogGammaConv} the $\Gamma$-convergence of the functional \eqref{DefFepsilon} holds true if both $\Lambda(\LL) \ge 0$ and $\Lambda_{\per}(\LL)>0$. However, the following theorem due to Braides and the first author shows that in $N$-dimensional elasticity for $N\geq 2$, the $\Gamma$-convergence result still holds under the sole assumption $\Lambda(\LL)\ge 0$.
\begin{theorem}[Braides \& Briane] \label{BBResult} 
Let $\Omega$ be a bounded open subset of $\mathbb{R}^N$, and let $\LL$ be a bounded $Y_N$-periodic symmetric tensor-valued function in $L^\infty_{\rm per}\big(Y_N;\L_s(\RR^{N\times N})\big)$ such that
\begin{equation}\label{RealSufCond}
\Lambda(\LL)\ge 0.
\end{equation}
Then, we have
\begin{equation}\label{RealGammaConvResult}
\F^\varepsilon \stackrel{\Gamma-H^1_0(\Omega;\mathbb{R}^N)}\rightharpoonup \F^0,
\end{equation}
for the weak toplogy of $H^1_0(\Omega;\mathbb{R}^N)$, where $\F^0$ is given by \eqref{DefF0} with the tensor $\LL^0$ defined by \eqref{DefL0}.
\end{theorem}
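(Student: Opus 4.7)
The plan is to approximate $\LL$ by the perturbed tensor $\LL_\delta := \LL+\delta\,\mathbb{I}_s$ for $\delta>0$, apply Theorem~\ref{ThHomogGammaConv} at each fixed $\delta$, and then pass to the limit $\delta\to 0^+$. Since $(\LL_\delta M){:}M=\LL M{:}M+\delta|M^s|^2$ and Korn's identity yields $\int|(\nabla v)^s|^2\,dy\geq\tfrac12\int|\nabla v|^2\,dy$ on both $C^\infty_c(\RR^N;\RR^N)$ and $H^1_{\rm per}(Y_N;\RR^N)$, one has $\Lambda(\LL_\delta)\geq\delta/2>0$ and $\Lambda_{\per}(\LL_\delta)\geq\delta/2>0$. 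Theorem~\ref{ThHomogGammaConv} then yields the $\Gamma$-convergence of $\F^\ep_\delta$ toward $\F^0_\delta(v):=\int_\Omega\LL^0_\delta\nabla v:\nabla v\,dx$, where $\LL^0_\delta$ is defined by formula \eqref{DefL0} with $\LL$ replaced by $\LL_\delta$.

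The next step is to prove the pointwise convergence $\LL^0_\delta M{:}M\to\LL^0 M{:}M$ as $\delta\to 0^+$. The inequality $\LL^0_\delta M{:}M\geq\LL^0 M{:}M$ is immediate from the definition. For the converse I would fix $\eta>0$, pick $v_\eta\in H^1_{\per}(Y_N;\RR^N)$ with $\int_{Y_3}\LL(M+\nabla v_\eta):(M+\nabla v_\eta)\,dy\leq\LL^0 M{:}M+\eta$, and test the infimum defining $\LL^0_\delta$ against this $v_\eta$ to obtain
\[
\LL^0_\delta M{:}M\leq\LL^0 M{:}M+\eta+\delta\int_{Y_N}|M+\nabla v_\eta|^2\,dy.
\]
Letting first $\delta\to 0^+$ and then $\eta\to 0^+$ closes the gap. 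The uniform bound $\LL^0_\delta M{:}M\leq(\LL+\delta\,\mathbb{I}_s)M{:}M$ (obtained by testing with $v=0$) then allows dominated convergence to give $\F^0_\delta(v)\to\F^0(v)$ for every $v\in H^1_0(\Omega;\RR^N)$.

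For the $\Gamma$-liminf inequality at $u\in H^1_0(\Omega;\RR^N)$, I would take any $u_\ep\rightharpoonup u$, write $\F^\ep_\delta(u_\ep)\leq\F^\ep(u_\ep)+\delta\|\nabla u_\ep\|_{L^2}^2$, use the uniform $H^1_0$-bound on $u_\ep$, and combine the $\Gamma$-liminf for $\F^\ep_\delta$ with $\delta\to 0^+$ to conclude $\liminf_\ep\F^\ep(u_\ep)\geq\F^0(u)$. For the recovery sequence, for each $\delta>0$ pick $\bar u^\delta_\ep\rightharpoonup u$ with $\F^\ep_\delta(\bar u^\delta_\ep)\to\F^0_\delta(u)$; the inequality $\F^\ep\leq\F^\ep_\delta$ gives $\limsup_\ep\F^\ep(\bar u^\delta_\ep)\leq\F^0_\delta(u)$, and a standard diagonal extraction in $(\ep,\delta)$ produces $\bar u_\ep:=\bar u^{\delta(\ep)}_\ep\rightharpoonup u$ with $\F^\ep(\bar u_\ep)\to\F^0(u)$; the perturbation term $\delta(\ep)\int_\Omega|(\nabla\bar u_\ep)^s|^2\,dx$ vanishes because $\bar u_\ep$ remains bounded in $H^1_0$.

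The main delicate point will be the identification $\lim_{\delta\to 0^+}\LL^0_\delta=\LL^0$: under the sole assumption $\Lambda(\LL)\geq 0$ the infimum \eqref{DefL0} defining $\LL^0$ need not be attained, so a minimizing sequence may diverge in $H^1_{\per}$. The trick of fixing a quasi-minimizer $v_\eta$ before sending $\delta\to 0^+$ avoids any interchange of two limits on such potentially unbounded sequences, and the rest of the argument is a routine perturbation-plus-diagonal extraction.
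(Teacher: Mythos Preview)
Your proposal is correct and follows essentially the same perturbation strategy as the paper: set $\LL_\delta=\LL+\delta\,\mathbb{I}_s$, apply Theorem~\ref{ThHomogGammaConv} to $\LL_\delta$, prove $\LL^0_\delta\to\LL^0$ by testing against a fixed quasi-minimizer $v_\eta$ before sending $\delta\to0$, and conclude by letting $\delta\to0$. The only organizational differences are that you invoke the Korn identity $\int|e(v)|^2=\tfrac12\int|\nabla v|^2+\tfrac12\int(\dive v)^2$ (slightly cleaner than the paper's homothety argument for $\Lambda(\LL_\delta)>0$), and you verify the $\Gamma$-liminf/limsup inequalities directly whereas the paper extracts a $\Gamma$-convergent subsequence by compactness and identifies its limit; both routes are standard. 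One small caution on your recovery step: the claim that the diagonal sequence $\bar u_\ep=\bar u^{\delta(\ep)}_\ep$ ``remains bounded in $H^1_0$'' is not automatic, since the a-priori bound on $\bar u^\delta_\ep$ coming from the coercivity of $\LL_\delta$ is only $O(1/\delta)$; it is safer to stop at the inequality $\Gamma\text{-}\limsup\F^\ep(u)\le\F^0_\delta(u)$ for every $\delta$ (which you already have from $\F^\ep\le\F^\ep_\delta$), let $\delta\to0$, and then invoke the general fact that the sequential $\Gamma$-limsup is attained, rather than constructing the diagonal explicitly.
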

\begin{proof}
For $\delta>0$, set $\LL_\delta:= \LL + \delta\,\mathbb{I}_s$ where $\mathbb{I}_s$ is the unit symmetric tensor, and let $\F_\delta^\varepsilon$ be the functional defined by \eqref{DefFepsilon} with $\LL_\delta$. We claim that 
 \begin{equation}\label{Lambda(L_delta)>0}
 \Lambda(\LL_\delta)> 0.
 \end{equation}
 To prove it consider $v\in C^\infty_{c}(\mathbb{R}^N;\mathbb{R}^N)$ and take $R>0$ such that $\supp v \subset B(0,R)$. Then, by \eqref{RealSufCond} we have
 \begin{equation*}
 \int_{\mathbb{R}^N} \LL_\delta\nabla v : \nabla v \,dy  = \int_{B(0,R)} \LL \nabla v : \nabla v \, dy + \delta\int_{B(0,R)} \mathbb{I}_s \nabla v : \nabla v \, dy \ge \delta \int_{B(0,R)} |e(v)|^2\, dy. 
 \end{equation*}
 By Korn's inequality there exists a constant $\alpha>0$ which {\em a priori} depends on $B(0,R)$, such that
 \begin{equation*}
\int_{B(0,R)} |e(u)|\, dy\ge\alpha \int_{B(0,R)} |\nabla v |\, dy.
 \end{equation*}
 Nevertheless, the Korn constant $\alpha$ is known to be invariant by homothetic transformations of the domain. Hence, the constant $\alpha$ actually does not depend on the radius $R$. Therefore, the two previous inequalities imply that $\Lambda(\LL_\delta)\ge \delta\alpha>0$.
 \par
Thanks to \eqref{Lambda(L_delta)>0} we can apply Theorem \ref{ThHomogGammaConv} with the functional $\F_\delta^\varepsilon$.
Hence, $\F_\delta^\varepsilon \stackrel{\Gamma}\rightharpoonup \F_\delta^0$ for the weak topology of $H^1_0(\Omega;\mathbb{R}^N)$, where 
\[
\F_\delta^0(u) := \int_\Omega \LL_\delta^0 \nabla u : \nabla u \, dx\quad \mbox{for } u\in H^1_0(\Omega,\mathbb{R}^N),
\]
and $\LL_\delta^0$ is given by \eqref{DefL0} with $\LL=\LL_\delta$.
\newline\indent
On the one hand, since $H^1_0(\Omega;\mathbb{R}^N)$ is a separable metric space, up to subsequence there exists the $\Gamma$-limit of $\F^\varepsilon$ for the weak topology of $H^1_0(\Omega;\mathbb{R}^N)$ as $\varepsilon\to0$. Fix $u\in H^1_0(\Omega;\mathbb{R}^N)$, and consider a recovery sequence $u_\varepsilon$ for $\F^\varepsilon$ (see Definition~\ref{defGcon}) which converges weakly to $u$ in $H^1_0(\Omega;\mathbb{R}^N)$.
Since $u_\varepsilon$ is bounded in $H^1_0(\Omega,\mathbb{R}^N)$, we have
\begin{align*}
(\Gamma\mbox{-}\lim \F^\varepsilon)(u) 	&\le \F_\delta^0 (u) \\
										&\le \liminf_{\varepsilon\to0} \int_\Omega \LL_\delta(x/\varepsilon)\nabla u_\varepsilon : \nabla u_\varepsilon\, dx\\
										&\le \liminf_{\varepsilon\to0} \int_\Omega \LL(x/\ep) \nabla u_\varepsilon : \nabla u_\varepsilon\, dx + {O}(\delta)\\
										&= (\Gamma\mbox{-}\lim \F^\varepsilon) (u) + {O}(\delta),						     	 
\end{align*}
which implies that $\F_\delta^0(u)$ converges to $\F^0(u)$ as $\delta\to0$.
\par
On the other hand, let $\LL^0$ be given by \eqref{DefL0}. For $\eta>0$ and for $M\in \mathbb{R}^{N\times N}$, consider a function $\varphi_\eta$ in $H^1_{\per}(Y_N;\mathbb{R}^N)$ such that
$$
\int_{Y_N} \LL(y)(M+\nabla \varphi_\eta): (M+\nabla \varphi_\eta)\, dy\leq \LL^0 M:M + \eta.
$$
We then have
\begin{align*}
\LL^0 M:M 	& \le \LL_\delta^0 M:M \\
				&\le \int_{Y_N} \LL_\delta(y)(M + \nabla \varphi_\eta):(M + \nabla \varphi_\eta)\, dy\\
				& \le \int_{Y_N}\LL(y)(M + \nabla\varphi_\eta):(M + \nabla\varphi_\eta)\, dy + O_\eta(\delta).
\end{align*}
Hence, making $\delta$ tend to 0 for a fixed $\eta$, we obtain
\begin{align*}
\LL^0 M:M 	& \le \liminf_{\delta\to0}\  (\LL_\delta^0 M : M) \\
				& \le \limsup_{\delta\to0}\ (\LL_\delta^0 M : M) \\
				& \le \int_{Y_N} \LL(y) (M + \nabla\varphi_\eta) : (M + \nabla\varphi_\eta) \, dy \\
				& \le \LL^0 M : M + \eta.
\end{align*}
Due to the arbitrariness of $\eta$, we get that $\LL^0_\delta$ converges to $\LL^0$ as $\delta\to0$.\newline\indent
Therefore, by the Lebesgue dominated convergence theorem we conclude that for any $u\in H^1_0(\Omega;\mathbb{R}^N)$,
\[
\F^0(u)= \lim_{\delta\to0} \F_\delta^0 (u)=\lim_{\delta\to0} \int_\Omega \LL^0_\delta \nabla u : \nabla u\, dx= \int_\Omega \LL^0\nabla u : \nabla u\, dx.
\]
\end{proof}

\section{Loss of ellipticity in three-dimensional linear elasticity through the homogenization of a laminate}\label{LossOfEllipticity}
In this section we will construct an example of a three-dimensional strong elliptic material $\LL$ which is weakly coercive, {\em i.e.} $\Lambda(\LL)\ge 0$, but for which the strong ellipticity is lost through homogenization. Firstly, let us recall the following result due to Guti\'{e}rrez \cite{G01}.
\begin{proposition}[Guti\'{e}rrez \cite{G01}]\label{GutProp}
For any strongly, but not semi-very strongly elliptic isotropic material, referred to as material $a$, there are very strongly elliptic isotropic materials such that if we laminate them with material $a$, in appropriately chosen proportions and directions, we generate an effective elasticity tensor that is not strongly elliptic.
\end{proposition}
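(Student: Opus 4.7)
For an isotropic tensor $\LL M = \lambda\tr(M) I_3 + 2\mu M$, strong ellipticity amounts to $\mu > 0$ and $\lambda + 2\mu > 0$, while (semi-)very strong ellipticity amounts to $\mu > 0$ and $2\mu + 3\lambda \,(\geq)\,> 0$. Thus material $a$ is characterized by $\mu_a > 0$, $\lambda_a + 2\mu_a > 0$, but $2\mu_a + 3\lambda_a < 0$; in particular $\lambda_a < 0$. The plan is to construct a rank-two laminate whose effective tensor reaches the boundary of strong ellipticity, applying the Murat--Tartar $1^*$-convergence lamination formula at each stage.

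At the first stage I would layer $a$ with a very strongly elliptic isotropic companion $b$, with parameters $(\lambda_b,\mu_b)$ and volume fraction $\theta_1\in(0,1)$, in direction $e_1$. The effective tensor $\LL_1^*$ is obtained algebraically by inverting the acoustic tensors $Q_a(e_1)$ and $Q_b(e_1)$, where $Q(\xi)_{ij} := \LL_{ikjl}\xi_k\xi_l$; it is transversely isotropic about the $e_1$-axis. At the second stage I would laminate $\LL_1^*$ with a second very strongly elliptic isotropic phase $c$, with parameters $(\lambda_c,\mu_c)$ and volume fraction $\theta_2\in(0,1)$, in the orthogonal direction $e_2$, producing an anisotropic effective tensor $\LL_2^*$.

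Loss of strong ellipticity for $\LL_2^*$ is equivalent to the existence of unit vectors $\xi,\eta\in\RR^3$ with $\LL_2^*(\xi\otimes\eta):(\xi\otimes\eta)=0$, or equivalently to $\det Q_2^*(\xi) = 0$ for some $\xi$ on the unit sphere. The natural ansatz, motivated by the transverse isotropy produced at stage one, is to take $\xi$ in the lamination plane spanned by $e_1,e_2$ and $\eta$ orthogonal to that plane; by the symmetries of $\LL_2^*$ this reduces the vanishing condition to a single scalar polynomial equation in the angle of $\xi$ and the six free parameters $(\lambda_b,\mu_b,\lambda_c,\mu_c,\theta_1,\theta_2)$.

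The crux is then algebraic: one has to solve this scalar equation while keeping $(\lambda_b,\mu_b)$ and $(\lambda_c,\mu_c)$ in the open cone $\{\mu > 0,\ 2\mu + 3\lambda > 0\}$ of very strong ellipticity. This is where the hypothesis $2\mu_a + 3\lambda_a < 0$ becomes decisive: it supplies the only indefiniteness present in the construction, and it is precisely this negative ``bulk'' contribution of $a$ that can drive $\det Q_2^*(\xi)$ down to zero after two successive mixings with positive-definite companions. Concretely, I would first tune $\theta_1$ and $(\lambda_b,\mu_b)$ so that the transversely isotropic $\LL_1^*$ is already degenerate (or very close to being degenerate) along a well-chosen in-plane direction, and then adjust $(\lambda_c,\mu_c)$ and $\theta_2$ so that the full tensor $\LL_2^*$ touches the boundary of strong ellipticity along a genuine rank-one direction. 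The principal obstacle is not conceptual but computational: one must verify algebraically that the zero of the acoustic determinant persists and corresponds to a true rank-one vector, while both companion materials remain strictly inside the very-strong-ellipticity region.
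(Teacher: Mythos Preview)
Your overall architecture---a rank-two laminate built via successive $1^*$-convergence formulas, first mixing $a$ with a very strongly elliptic $b$ in direction $e_1$, then mixing the result with a very strongly elliptic $c$ in direction $e_2$---is exactly Guti\'errez's scheme and the one summarized in the paper. The gap is in your choice of the degenerate rank-one direction.

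You propose $\xi\in\mathrm{span}(e_1,e_2)$ and $\eta=e_3$. For such $M=\xi\otimes e_3$ (with $\xi_3=0$) one has
\[
\LL_2^*M:M=(\LL_2^*)_{1313}\,\xi_1^2+2(\LL_2^*)_{1323}\,\xi_1\xi_2+(\LL_2^*)_{2323}\,\xi_2^2,
\]
and these shear entries are governed by the positive quantities $1/E$, $C$, $\mu_c$ in the paper's notation; they remain strictly positive throughout the admissible parameter range, so your ansatz cannot produce a zero. In Guti\'errez's construction (and in the explicit computations of Theorem~\ref{ThRankTwoLossStrEll}) the loss of ellipticity occurs at $M=e_3\otimes e_3$, with \emph{both} vectors orthogonal to the two lamination directions: the mechanism kills the normal component $(\LL_2^*)_{3333}$, not a shear one. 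Concretely one tunes $\theta_1$ so that $B=0$ in \eqref{DefABCDE}, which decouples the $e_1$-direction, and then chooses $(\mu_c,\alpha_c,\theta_2)$ to annihilate $(\LL_2^*)_{3333}=I_1+G_1^2/F_1$.

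A secondary point: the paper's summary records a case split you do not mention. When $\mu_a+\lambda_a<0$ one can already obtain $\alpha_{\se}(\LL_1^*)=0$ with a \emph{single} rank-one lamination by taking $\mu_b=-(\mu_a+\lambda_a)$; the rank-two step is needed only in the regime $0\le\mu_a+\lambda_a$ (still compatible with $2\mu_a+3\lambda_a<0$).
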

\begin{remark}[Isotropic tensors] The elasticity tensor $\LL\in L^\infty\big(Y_3;\L_s(\mathbb{R}^{3\times 3})\big)$ of an isotropic material is given by
\begin{equation*}
\LL(y)M=\lambda(y)\tr(M) I_3  + 2\mu(y)M,\quad\mbox{for }y\in Y_3\mbox{ and }M\in \mathbb{R}^{3\times 3}_s,
\end{equation*}
where $\lambda$ and $\mu$ are the Lam\'e coefficients of $\LL$.\newline
As a consequence, we have 
\begin{equation*}
\alpha_{\se}(\LL) = \essinf_{y\in Y_3}\big( \min\{ \mu(y),2\mu(y) + \lambda(y)  \} \big),
\end{equation*}
\begin{equation*}
\alpha_{\vse}(\LL) = \essinf_{y\in Y_3}\big( \min\{ \mu(y),2\mu(y) + 3\lambda(y) \} \big). 
\end{equation*}
\end{remark}
\par
Here is a summary of the proof of Proposition \ref{GutProp}.
Consider two isotropic, homogeneous tensors $\LL_a$ and $\LL_b$ such that $\LL_a$ is strongly elliptic, {\em i.e.}
\begin{equation*}
\lambda_a+2\mu_a>0,\quad \mu_a>0,
\end{equation*}
but not semi-very strongly elliptic, {\em i.e.}
\begin{equation*}
3\lambda_a+2\mu_a < 0.
\end{equation*}
and such that $\LL_b$ is very strongly elliptic, {\em i.e.}
\begin{equation*}
3\lambda_b+2\mu_b>0,\quad\mu_b>0.
\end{equation*}
Considering the rank-one laminate in the direction $y_1$ mixing $\LL_a$ with volume fraction $\theta_1\in (0,1)$ and $\LL_b$ with volume fraction $(1-\theta_1)$, Guti\'errez \cite{G01} proved that the effective tensor $\LL_1^*$ in the sense of Murat-Tartar $1^*$-convergence (see, {\em e.g.}, \cite[Section~3]{G01}) satisfies the following properties:
\begin{itemize}
\item If $0\le\mu_a+\lambda_a$, then
\begin{equation*}
\alpha_{\se}(\LL^*_1)>0.
\end{equation*}
\item If $-\mu_b\le\mu_a+\lambda_a<0$, then
\begin{equation*}
\alpha_{\se}(\LL^*_1)
\begin{cases}
= 0&\mbox{if } \mu_b = -\mu_a-\lambda_a,\\
\ge 0&\mbox{if }-\mu_a-\lambda_a<\mu_b\le -{1\over4}(2\mu_a+3\lambda_a),\\
> 0&\mbox{if }-{1\over 4}(2\mu_a+3\lambda_a)<\mu_b.
\end{cases}
\end{equation*}
\item The case $\mu_a + \lambda_a < -\mu_b$ is disposed of, since $\LL_1^*$ does not even satisfy the Legendre-Hadamard condition.
\end{itemize}
In the case where $\alpha_{\rm se}(\LL^*_1)>0$,  Guti\'{e}rrez (see \cite[Section 5.2]{G01}) performed a second lamination in the direction $y_2$ mixing the anisotropic material generated by the first lamination with volume fraction $\theta_2\in(0,1)$, and a suitable very strongly elliptic isotropic material ($\LL_c,\mu_c,\lambda_c$) with volume fraction $(1-\theta_2)$. In this way he derived a rank-two laminate of effective tensor $\LL^*_2$ which is not strongly elliptic.
\par
In this section we will try to find a general class of periodic laminates for which the strong ellipticity is lost through homogenization. To this end we will extend to dimension three the rank-one lamination approach of \cite{BF01} performed in dimension two. However, the outcome is surprisingly different from that of the two-dimensional case of \cite{BF01}. Indeed, we will prove in the first subsection that it is not possible to lose strong ellipticity by a rank-one lamination through homogenization following the two-dimensional approach of \cite{BF01}. This is the reason why we will perform a second lamination in the second part of the section.

\subsection{Rank-one lamination}\label{SubSecRank1Lam}
In this subsection we are going to focus on the rank-one lamination. As noted before, in the two-dimensional case of \cite{BF01} it was proved a necessary and sufficient condition for a general rank-one laminate to lose strong ellipticity. Mimicking the same approach in dimension three we obtain the following quite different result.

\begin{theorem}\label{ThNoLossFirstLaminate}
Let $\LL\in L^\infty_{\per}\big(Y_1;\L_s(\mathbb{R}^{3\times 3})\big)$ be a $Y_1$-periodic isotropic tensor-valued function which is strongly elliptic, {\em i.e.} $\alpha_{\se}(\LL)>0$. Assume that $\Lambda_{\per}(\LL)>0$ and that there exists a constant matrix $D\in\mathbb{R}^{3\times3}$ such that
\begin{equation}\label{CondLM:M+D:Cof(M)>0}
\LL(y_1)M:M + D:\Cof(M)\geq0,\quad \mbox{a.e. }y_1\in Y_1,\quad\forall\, M\in\mathbb{R}^{3\times 3}.
\end{equation}
Then, the homogenized tensor $\LL^0$ defined by \eqref{DefL0} is strongly elliptic, {\em i.e.} $\alpha_{\se}(\LL^0)>0$.
\end{theorem}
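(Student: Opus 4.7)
The plan is to argue by contradiction: assume $\LL^0(a_0\otimes b_0):(a_0\otimes b_0)=0$ for some nonzero $a_0,b_0\in\RR^3$, and derive a contradiction. The starting point is to pass the translation inequality \eqref{CondLM:M+D:Cof(M)>0} to the homogenized level. Since $M\mapsto D:\Cof(M)$ is a null Lagrangian (so that $\int_{Y_3}D:\Cof(M+\nabla v)\,dy=D:\Cof(M)$ for every $v\in H^1_{\rm per}(Y_3;\RR^3)$), the modified quadratic form $\tilde E(y_1,N):=\LL(y_1)N:N+D:\Cof(N)\ge 0$ is nonnegative and hence convex in $N$, and
\[
\LL^0 M:M+D:\Cof(M)=\min_{v\in H^1_{\rm per}(Y_3;\RR^3)}\int_{Y_3}\tilde E(y_1,M+\nabla v)\,dy\ge 0.
\]
For the rank-one matrix $M_0=a_0\otimes b_0$ we have $\Cof(M_0)=0$, so our assumption forces the minimum on the right-hand side to equal zero.

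Existence of a minimizer $v^*$ is ensured by $\Lambda_{\rm per}(\LL)>0$. Using convexity of $\tilde E(y_1,\cdot)$ and Jensen's inequality in the transverse variables $(y_2,y_3)$, the minimum is realized by a $y_1$-dependent field $\nabla v^*=\xi(y_1)\otimes e_1$ with $\int_{Y_1}\xi\,dy_1=0$. Since $\tilde E\ge 0$ and its integral vanishes, we obtain the pointwise identity
\[
\tilde E\bigl(y_1,N(y_1)\bigr)=0 \quad\text{for a.e. } y_1\in Y_1,\qquad N(y_1):=a_0\otimes b_0+\xi(y_1)\otimes e_1.
\]

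A first dichotomy disposes of the case $b_0\cdot e_1=0$: the Euler--Lagrange equation $A(y_1)\xi(y_1)+\LL(y_1)M_0 e_1=\tau$ (where $A(y_1)a:=\LL(y_1)(a\otimes e_1)e_1$ is the acoustic tensor, which is positive definite because $\alpha_{\se}(\LL)>0$, and $\tau\in\RR^3$ is a Lagrange multiplier determined by $\int\xi=0$) reduces to $A(y_1)\xi(y_1)=\tau$. The zero-mean condition gives $\langle A^{-1}\rangle\tau=0$ hence $\tau=0$ and $\xi\equiv 0$, so $\LL^0 M_0:M_0=\int_{Y_1}\LL(y_1)M_0:M_0\,dy_1\ge\alpha_{\se}(\LL)|a_0|^2|b_0|^2>0$, contradicting the hypothesis. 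Hence we may assume $b_0\cdot e_1\neq 0$.

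In this remaining case, columns $2$ and $3$ of $N(y_1)$ are the constant vectors $(b_0)_2 a_0$ and $(b_0)_3 a_0$, both parallel to $a_0$, so by the cross-product formula for rows of the cofactor matrix
\[
\Cof\bigl(N(y_1)\bigr)=\omega\otimes\bigl(a_0\times c(y_1)\bigr),\qquad \omega:=(b_0)_3 e_2-(b_0)_2 e_3,\quad c(y_1):=(b_0)_1 a_0+\xi(y_1),
\]
which is rank-one and affine in $\xi(y_1)$. Substituting into the isotropic expression $\tilde E(y_1,N)=\lambda(y_1)(\tr N)^2+2\mu(y_1)|N^s|^2+D:\Cof(N)$ makes $\tilde E(y_1,N(y_1))$ a quadratic polynomial in $\xi(y_1)$ whose quadratic part is $A(y_1)\xi\cdot\xi$; its vanishing forces $\xi(y_1)=-\tfrac12 A(y_1)^{-1}\ell(y_1)$ together with the pointwise discriminant identity $C(y_1)=\tfrac14\ell(y_1)\cdot A(y_1)^{-1}\ell(y_1)$, where $\ell(y_1),C(y_1)$ are explicit in terms of $\lambda(y_1),\mu(y_1),a_0,b_0,D$. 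The main obstacle is to combine this pointwise identity with the cell equation and the zero-mean constraint $\int\xi=0$ to conclude that $\xi\equiv 0$ and thus $M_0\in\ker\tilde E(y_1)$ for a.e. $y_1$. Once this is reached, since $M_0$ is a nonzero rank-one matrix and $\Cof(M_0)=0$, strong ellipticity of $\LL$ gives $\tilde E(y_1,M_0)=\LL(y_1)M_0:M_0\ge\alpha_{\se}(\LL)|a_0|^2|b_0|^2>0$, the desired contradiction. The rigidity step forcing $\xi\equiv 0$ is where the specifically three-dimensional cofactor structure (absent in the planar setting of \cite{BF01}) interacts with the isotropy of $\LL$ and the periodic coercivity $\Lambda_{\rm per}(\LL)>0$, presumably by comparing the $y_1$-dependence of the two sides of the discriminant identity and using $\Lambda_{\rm per}(\LL)>0$ to rule out nontrivial oscillatory $\xi$.
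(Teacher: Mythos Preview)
Your proposal is not a proof: the central step is explicitly left open. You write that ``the main obstacle is to combine this pointwise identity with the cell equation and the zero-mean constraint to conclude that $\xi\equiv 0$'' and that this ``presumably'' works. But the target $\xi\equiv 0$ is in fact the wrong one. In the paper's analysis (Lemma~\ref{LemmaAnisotropic} and the proof of Lemma~\ref{LemNoLossStrEllipIsotrop}) the minimizer $v_M'$ is generically \emph{nonzero}; it is determined by explicit pointwise formulas (equations \eqref{pointwEqs}). The contradiction does not come from forcing the corrector to vanish but from showing that the constraints imposed on the rank-one directions $a_0,b_0$ themselves (your notation; $\xi,\eta$ in the paper) are incompatible with $a_0,b_0\neq 0$. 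So even a successful completion of your sketch would have to change its endpoint.

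There is also a concrete error in your first dichotomy. When $b_0\cdot e_1=0$ you claim the Euler--Lagrange equation ``reduces to $A(y_1)\xi(y_1)=\tau$''. But for isotropic $\LL$, $(\LL(y_1)M_0)e_1=\lambda(y_1)(a_0\cdot b_0)e_1+\mu(y_1)(a_0)_1 b_0$, which depends on $y_1$ through $\lambda,\mu$ and does not vanish (nor become constant) merely because $(b_0)_1=0$. So the reduction fails and $\xi\equiv 0$ does not follow in that case either.

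Finally, the paper's strategy is structurally different and worth knowing. Rather than attacking the problem with a general matrix $D$, the paper first proves an algebraic reduction: if \eqref{CondLM:M+D:Cof(M)>0} holds for some $D\in\RR^{3\times 3}$, then it already holds with $D=dI_3$ for some scalar $d\ge 0$ satisfying \eqref{condLemmaIsotr}. This is obtained by reading \eqref{CondLM:M+D:Cof(M)>0} as positive semidefiniteness of an explicit $9\times 9$ matrix and analyzing its $2\times 2$ and $3\times 3$ principal minors. Only after this reduction does the paper run the contradiction argument (Lemma~\ref{LemNoLossStrEllipIsotrop}), where the isotropy together with $D=dI_3$ collapses the pointwise identity $\LL M{:}M=Q(M)$ to the single scalar equation \eqref{LM:M=Q(M)IsotrReduced}; that equation, together with the three averaged compatibility relations \eqref{intregrEqsIsotr1}--\eqref{intregrEqsIsotr3}, forces $a_0=0$ or $b_0=0$ by elementary case analysis. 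Your framework with general $D$ never reaches a tractable scalar identity of this kind, which is why the ``rigidity step'' remains out of reach.
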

\begin{remark}
In dimension two for any periodic function $\varphi\in H^1_{\per}(Y_2;\mathbb{R}^2)$, the only null lagrangian (up to a multiplicative constant) is the determinant of $\nabla\varphi$. Although the two-dimensional case seems {\em a priori} more restrictive than the three-dimensional case from an algebraic point of view, the two-dimensional Theorem~3.1 of \cite{BF01} shows that for a suitable isotropic tensor $\LL=\LL(y_1)$, satisfying for some constant $d\in\mathbb{R}$, the condition
\begin{equation}\label{Transine2D}
\LL(y_1)M:M+d \det(M)\ge0,\quad \mbox{a.e. in }Y_1,\ \forall\, M\in\mathbb{R}^{2\times2},
\end{equation}
it is possible to lose strong ellipticity through homogenization. On the contrary, the three-dimensional Theorem~\ref{ThNoLossFirstLaminate} shows that it is not possible to lose strong ellipticity under condition \eqref{CondLM:M+D:Cof(M)>0} which is the natural three-dimensional extension of \eqref{Transine2D}.
\end{remark}
\begin{remark}\label{RemarkPartialResult}
Observe that condition \eqref{CondLM:M+D:Cof(M)>0} implies that $\LL$ is weakly coercive, {\em i.e.} $\Lambda(\LL)\ge0$, but the converse is not true in general. Therefore, it might be possible to find a weakly coercive, strongly elliptic isotropic tensor $\LL=\LL(y_1)$ for which the strong ellipticity is lost. However, we have not succeeded in deriving such a tensor.
\end{remark}
\begin{remark}
In the proof of Proposition~\ref{GutProp} Guti\'errez implicitly proved the result of Theorem \ref{ThNoLossFirstLaminate} when the matrix $D$ has the form $D=dI_3$ and $\LL$ is of the type
\begin{equation*}
\LL(y_1) = \chi(y_1)\,\LL_a +\big(1-\chi(y_1)\big)\,\LL_b.
\end{equation*}
Moreover, it is worth mentioning that the cases for which Guiti\'errez obtained the loss of ellipticity with a rank-one lamination do not contradict Theorem \ref{ThNoLossFirstLaminate}, since in those cases condition \eqref{CondLM:M+D:Cof(M)>0} does not hold.
\end{remark}
The rest of this subsection is devoted to the proof of Theorem \ref{ThNoLossFirstLaminate}.
For any $Y_1$-periodic tensor-valued function $\LL\in L^\infty_{\per}\big(Y_1;\L_s(\mathbb{R}^{3\times 3})\big)$ which is strongly elliptic, {\em i.e.} $\alpha_{\se}(\LL)>0$, define for a.e. $y_1\in Y_1$, the $y_1$-dependent inner product
\begin{equation*}
(\xi,\eta)\in \mathbb{R}^3\times \mathbb{R}^3\mapsto \LL(y_1)(\xi\otimes e_1):(\eta\otimes e_1).
\end{equation*}
It is indeed an inner product because $\alpha_{\se}(\LL)> 0$. The matrix-valued function 
\begin{multline}\label{DefMatrixL}
L(y_1)= \begin{pmatrix}
 l_1(y_1) & l_{12}(y_1) & l_{13}(y_1)\cr
 l_{12}(y_1)  & l_2(y_1) & l_{23}(y_1) \cr
 l_{13}(y_1) & l_{23}(y_1) & l_3(y_1) 
\end{pmatrix} :=\\
\begin{pmatrix}
 \LL(y_1)(e_1 \otimes e_1):(e_1 \otimes e_1) & \LL(y_1) (e_1 \otimes e_1):(e_2 \otimes e_1)& \LL(y_1)(e_1 \otimes e_1):(e_3 \otimes e_1)\cr
 \LL(y_1)(e_1 \otimes e_1):(e_2 \otimes e_1)  & \LL(y_1)(e_2 \otimes e_1):(e_2 \otimes e_1) & \LL(y_1)(e_2 \otimes e_1):(e_3 \otimes e_1) \cr
 \LL(y_1)(e_1 \otimes e_1):(e_3 \otimes e_1) & \LL(y_1)(e_2 \otimes e_1):(e_3 \otimes e_1) & \LL(y_1) (e_3 \otimes e_1):(e_3 \otimes e_1)
\end{pmatrix}
\end{multline}
is therefore symmetric positive definite.
\par
Similarly to \cite[Lemma 3.3]{BF01} the next result provides an estimate which is a direct consequence of condition~\eqref{CondLM:M+D:Cof(M)>0} with a matrix of the type $D=dI_3$. Observe that for the moment we are not assuming that the tensor $\LL$ is isotropic.
\begin{lemma}\label{LemmaAnisotropic}
Let $\LL\in L^\infty_{\per}(Y_1;\L_s(\mathbb{R}^{3\times 3}))$ be a $Y_1$-periodic bounded tensor-valued function with $\Lambda_{\per}(\LL)>0$. Assume the existence of a constant $d\in\mathbb{R}$ such that $\LL$ satisfies condition \eqref{CondLM:M+D:Cof(M)>0} with $D=dI_3$. Then, we have
\begin{equation}\label{L>Q}
\LL(y_1)M:M \ge Q(M), \quad \mbox{a.e. in } Y_1, \ \forall\, M\in \mathbb{R}^{3\times3},\ \mbox{$M$ rank-one,}
\end{equation}
where
\begin{align*}
& Q(M) :=\\
&{\det(\tilde{L}^{11}) \over \det(L)}\left( \LL M:(e_1\otimes e_1) + {d\over 2}{M_{33}} + {d\over 2}{M_{22}} \right)^2  +  {\det(\tilde{L}^{22}) \over \det(L)}  \left( \LL M:(e_2\otimes e_1) - {d\over 2}{M_{12}} \right)^2 \\
&+ {\det(\tilde{L}^{33})\over \det(L)}\left( \LL M:(e_3\otimes e_1) - {d\over 2}{M_{13}} \right)^2  - {2\det(\tilde{L}^{12}) \over \det(L)} \left( \LL M:(e_1\otimes e_1) + {d\over 2}{M_{33}} + {d\over 2}{M_{22}} \right)\left( \LL M:(e_2\otimes e_1) - {d\over 2}{M_{12}} \right) \\
& + {2 \det(\tilde{L}^{13})\over \det(L)}\left( \LL M:(e_1\otimes e_1) + {d\over 2}{M_{33}} + {d\over 2}{M_{22}} \right)\left( \LL M:(e_3\otimes e_1) - {d\over 2}{M_{13}} \right)\\
& - {2\det(\tilde{L}^{23})\over \det(L)}\left( \LL M:(e_2\otimes e_1) - {d\over 2}{M_{12}} \right)\left( \LL M:(e_3\otimes e_1) - {d\over 2}{M_{13}} \right).
\end{align*}

Furthermore, if $\LL^0$ is the homogenized tensor of $\LL$, then $\alpha_{\se}(\LL^0) = 0$ if and only if there exists a rank-one matrix $M$ such that 
\begin{equation}\label{LM:M=Q(M)}
\LL(y_1)M:M = Q(M),\quad\mbox{a.e. in }Y_1,
\end{equation}
together with
\begin{equation}\label{integrEqs}
\left\{
\begin{aligned}
	&\int_{Y_1} {\det(\tilde{L}^{13})\over \det(L)}(t) \left( \LL(t)M:(e_1\otimes e_1) + {d\over 2}{M_{22}} + {d\over 2}{M_{33}} \right)dt \\
	&= \int_{Y_1} \left[ {\det(\tilde{L}^{23})\over\det(L)}(t)\left(\LL(t)M:(e_2\otimes e_1) - {d\over 2}{M_{12}}\right) - { \det(\tilde{L}^{33})\over \det(L)}(t)\left(\LL(t)M:(e_3\otimes e_1) - {d\over 2}{M_{13}}\right) \right]dt, \\ \\
	&\int_{Y_1}  {\det(\tilde{L}^{12})\over \det(L)}(t)\left( \LL(t)M:(e_1\otimes e_1) + {d\over 2}{M_{22}} + {d\over 2}{M_{33}} \right)dt \\
	&= \int_{Y_1} \left[ {\det(\tilde{L}^{22})\over \det(L)}(t) \left( \LL(t)M:(e_2\otimes e_1) - {d\over 2}{M_{12}}\right) - {\det(\tilde{L}^{23})\over \det(L)}(t)\left( \LL(t)M:(e_3\otimes e_1) - {d\over 2}{M_{13}}\right)   \right]dt, \\ \\
	&\int_{Y_1}{\det(\tilde{L}^{11})\over\det(L)} (t)\left( \LL(t)M:(e_1\otimes e_1) + {d\over 2}{M_{22}} + {d\over 2}{M_{33}} \right)dt \\
	&= \int_{Y_1} \left[	{ \det(\tilde{L}^{12})\over \det(L)}(t)\left( \LL(t)M:(e_2\otimes e_1) - {d\over 2}{M_{12}} \right) - 
	{\det(\tilde{L}^{13})\over \det(L)} (t) \left( \LL(t)M:(e_3\otimes e_1) - {d\over 2}{M_{13}} \right)\right] dt.
\end{aligned}
\right.
\end{equation}
\end{lemma}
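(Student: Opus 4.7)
The plan is to view the quadratic form $F(M) := \LL(y_1) M : M + d\,\tr(\Cof M)$, which is $\geq 0$ by the translation hypothesis \eqref{CondLM:M+D:Cof(M)>0} with $D = d\,I_3$, as a non-negative quadratic form in the first column $\xi := M e_1$ of $M$ with the remaining six entries as parameters; then to minimize over $\xi$ pointwise in $y_1$, and to transfer the resulting inequality to the cell problem \eqref{DefL0} via the fact that $\LL$ depends only on $y_1$.

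\textbf{Step 1 (pointwise estimate and equality case).} Decompose $M = \xi\otimes e_1 + M'$ with $M'$ of zero first column. A direct expansion using
\[
\tr(\Cof M) = M_{11}(M_{22}+M_{33}) - M_{12}M_{21} - M_{13}M_{31} + M_{22}M_{33} - M_{23}M_{32}
\]
yields
\[
F(M) = L(y_1)\xi\cdot\xi + 2\,V(M',y_1)\cdot\xi + R(M',y_1),
\]
with $V_1 = \LL M':(e_1\otimes e_1) + (d/2)(M_{22}+M_{33})$, $V_2 = \LL M':(e_2\otimes e_1) - (d/2)M_{12}$, $V_3 = \LL M':(e_3\otimes e_1) - (d/2)M_{13}$, and $R$ a residual quadratic form in the entries of $M'$. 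Since $F \geq 0$ for every $\xi$ and $L(y_1)$ is positive definite (by $\alpha_\se(\LL)>0$), minimizing over $\xi$ gives $R \geq L^{-1}V\cdot V$. The relation $\LL M:(e_i\otimes e_1) = (L\xi)_i + \LL M':(e_i\otimes e_1)$ identifies the vector $w(M)$ of the statement as $w(M) = L(y_1)\xi + V(M',y_1)$, and completing the square rewrites $F(M) = Q(M) + (R - L^{-1}V\cdot V) \geq Q(M)$ for every $M$. For rank-one $M$, $\Cof M = 0$, hence $\LL(y_1) M:M = F(M) \geq Q(M)$, which is \eqref{L>Q}. Moreover, pointwise equality $\LL(y_1) M:M = Q(M)$ amounts to $R(M',y_1) = L^{-1}(y_1) V(M',y_1)\cdot V(M',y_1)$, i.e.\ to the minimum of $F$ over the first column vanishing.

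\textbf{Step 2 (reduction of the cell problem to one dimension).} Since $\LL$ depends only on $y_1$, any $v\in H^1_{\per}(Y_3;\RR^3)$ can be replaced by its average $\tilde v(y_1):=\int_{(0,1)^2} v(y_1,y_2,y_3)\,dy_2 dy_3$ without increasing the energy in \eqref{DefL0}: expanding the quadratic form $F$-free part of the integrand and observing that the cross term $\int \LL(y_1)(M+\nabla\tilde v):\nabla(v-\tilde v)\,dy$ vanishes (the inner integral $\int_{(0,1)^2}\nabla(v-\tilde v)\,dy_2 dy_3$ is zero at every $y_1$ by periodicity), one is left with an excess $\int_{Y_3}\LL\,\nabla(v-\tilde v):\nabla(v-\tilde v)\,dy\geq 0$ by $\Lambda_{\per}(\LL)>0$. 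Hence
\[
\LL^0 M:M = \min_{\phi\in H^1_{\per}(Y_1;\RR^3)}\int_0^1 \LL(y_1)(M+\phi'\otimes e_1):(M+\phi'\otimes e_1)\,dy_1,
\]
with the minimum attained. A direct computation shows $\int_0^1 \tr\Cof(M+\phi'(y_1)\otimes e_1)\,dy_1 = \tr\Cof M$ (the perturbation contributes only terms linear in $\phi'$, which have zero mean), which vanishes for rank-one $M$. Combining with Step 1 gives, for rank-one $M$,
\[
\LL^0 M:M = \min_\phi \int_0^1 F(M+\phi'\otimes e_1)\,dy_1 \;\geq\; \int_0^1 Q(M+\phi'\otimes e_1)\,dy_1 \;\geq\; 0.
\]

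\textbf{Step 3 (characterization of $\alpha_\se(\LL^0)=0$, and main obstacle).} Since $\LL^0$ is constant, $\alpha_\se(\LL^0)=0$ iff some rank-one $M$ realizes $\LL^0 M:M = 0$. Equality in the chain of Step 2 forces both $Q(M+\phi'^*\otimes e_1) = 0$ and $F(M+\phi'^*\otimes e_1) = 0$ a.e.\ at the minimizer $\phi^*$. The first, by positive definiteness of $L^{-1}$, gives $w(M+\phi'^*\otimes e_1) = L(y_1)\phi'^*(y_1) + w(M,y_1) = 0$, which determines $\phi'^*(y_1) = -L^{-1}(y_1) w(M,y_1)$; the second, by the equality case of Step 1, is equivalent to $\LL(y_1)M:M = Q(M)$ a.e.\ in $Y_1$, a condition involving only $M$. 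Finally, the periodicity requirement $\int_0^1 \phi'^*\,dy_1 = 0$ reads $\int_0^1 L^{-1}(y_1) w(M,y_1)\,dy_1 = 0$, which, expanded row by row using $(L^{-1})_{ij} = (-1)^{i+j}\det(\tilde L^{ij})/\det L$ for the symmetric matrix $L$, reproduces the three scalar identities \eqref{integrEqs} with their displayed sign pattern. Conversely, given rank-one $M$ satisfying both conditions, setting $\phi'^*(y_1) := -L^{-1}(y_1) w(M,y_1)$ defines an admissible periodic test function realizing $\LL^0 M:M = 0$. The main delicate points are the algebraic bookkeeping in Step 1 (isolating the first-column dependence of $d\,\tr\Cof M$ to produce the exact corrections in $w$) and the unpacking in Step 3 of the single $\RR^3$-valued identity $\int L^{-1}w\,dy_1 = 0$ into the three scalar identities of \eqref{integrEqs} with the precise signs dictated by the cofactor formula.
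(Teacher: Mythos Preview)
Your proof is correct and follows essentially the same strategy as the paper's: both minimize the translated density $F(M+\phi'\otimes e_1)$ over the first-column perturbation, reduce the cell problem to one dimension (the paper by citing \cite[Lemma~3.2]{BF01}, you by a direct averaging argument using $\Lambda_{\per}(\LL)>0$), and then read off the integral constraints from the periodicity of the optimal $\phi^*$. The only difference is presentational: you package the completion of squares as $F = L^{-1}w\cdot w + (R-L^{-1}V\cdot V)$ and the three scalar identities \eqref{integrEqs} as the single vector condition $\int_0^1 L^{-1}(y_1)\,w(M,y_1)\,dy_1 = 0$, whereas the paper performs a sequential Cholesky-type completion in $\varphi'_1,\varphi'_2,\varphi'_3$ and back-substitutes by hand; the content is identical.
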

Finally, we state a corollary of the previous result in the particular case of isotropic tensors. 
\begin{lemma}\label{LemNoLossStrEllipIsotrop}
Let $\LL\in L^\infty_{\per}(Y_1;\L_s(\mathbb{R}^{3\times 3}))$ be a $Y_1$-periodic bounded isotropic tensor-valued function with $\Lambda_{\per}(\LL)>0$. Assume that there exists a constant $d\in \mathbb{R}$ such that the Lam\'e coefficients of $\LL(y_1)$ satisfy
 \begin{equation}\label{condLemmaIsotr}
\max\{0,-2\mu(y_1)-3\lambda(y_1)\} \le d \le 4\mu(y_1) \quad\mbox{for a.e. $y_1$ in }Y_1.
\end{equation}
Then, the homogenized tensor $\LL^0$ defined by \eqref{DefL0} is strongly elliptic.
\end{lemma}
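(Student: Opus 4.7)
I would apply Lemma~\ref{LemmaAnisotropic} to the isotropic tensor $\LL$ with the scalar translation matrix $D = d I_3$. The first task is to verify the translation inequality \eqref{CondLM:M+D:Cof(M)>0}: decomposing any $M\in\RR^{3\times 3}$ as $M = M^a + \tfrac{\tr M}{3}I_3 + M^s_0$ into antisymmetric, isotropic, and trace-free symmetric parts, and using $\tr(\Cof M) = \tfrac{1}{2}\bigl((\tr M)^2 - \tr(M^2)\bigr)$ together with $\tr(M^2) = |M^s|^2 - |M^a|^2$, a direct computation yields
\[
\LL(y_1) M:M + d\,\tr(\Cof M) \;=\; \tfrac{3\lambda + 2\mu + d}{3}(\tr M)^2 + \bigl(2\mu - \tfrac{d}{2}\bigr)|M^s_0|^2 + \tfrac{d}{2}|M^a|^2,
\]
which is a sum of three non-negative terms precisely under hypothesis \eqref{condLemmaIsotr}.

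Next, I would specialize the ingredients of Lemma~\ref{LemmaAnisotropic} to the isotropic setting. A direct computation of the entries $\LL(y_1)(e_i \otimes e_1) : (e_j \otimes e_1)$ shows that $L(y_1) = \operatorname{diag}(\lambda+2\mu,\mu,\mu)$, so the three off-diagonal cofactors $\det \tilde L^{ij}$ for $i\neq j$ vanish identically. Substituting this into the expression for $Q(M)$ and expanding $\LL M : M - Q(M)$ for a rank-one $M = a \otimes b$ produces the clean identity
\[
\LL M:M - Q(M) \;=\; A(y_1)(a_2 b_2 + a_3 b_3)^2 + \mu(y_1)(a_2 b_3 - a_3 b_2)^2 + C(y_1)\,a_1^2(b_2^2 + b_3^2),
\]
where $A = (2\mu-d/2)(2\lambda+2\mu+d/2)/(\lambda+2\mu) \geq 0$ and $C = d(2\mu-d/2)/(2\mu) \geq 0$ (the non-negativity of $2\lambda + 2\mu + d/2$ follows from $d \geq -(3\lambda+2\mu)$ together with $\lambda+2\mu > 0$). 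In the same way, the three integral equations \eqref{integrEqs} collapse to the linear system
\[
a_3 b_1 + \alpha\,a_1 b_3 = 0, \qquad a_2 b_1 + \alpha\,a_1 b_2 = 0, \qquad a_1 b_1 + \beta(a_2 b_2 + a_3 b_3) = 0,
\]
with $\alpha = 1 - \tfrac{d}{2}\int_{Y_1}\!\tfrac{1}{\mu}\,dt$ and $\beta = \int_{Y_1}\!\tfrac{\lambda+d/2}{\lambda+2\mu}\,dt$.

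Finally, I would argue by contradiction: assume $\alpha_{\se}(\LL^0) = 0$, so that by Lemma~\ref{LemmaAnisotropic} there exists a rank-one $M = a\otimes b$ satisfying the pointwise equality $\LL M:M = Q(M)$ and the three linear equations above. Since $\mu > 0$, the middle term in the decomposition of $\LL M : M - Q(M)$ already forces $a_2 b_3 = a_3 b_2$. The vanishing of the two remaining terms splits into three exhaustive sub-cases: \emph{(i)} $d = 0$, where $C \equiv 0$; \emph{(ii)} $d > 0$ with $\mu \equiv d/4$ constant, in which $A \equiv C \equiv 0$ but $(\alpha,\beta) = (-1,1)$; and \emph{(iii)} $d > 0$ with $\mu \not\equiv d/4$, in which $A, C > 0$ on a set of positive measure, so that $a_2 b_2 + a_3 b_3 = 0$ and $a_1(b_2^2+b_3^2) = 0$. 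In each sub-case, I would combine the pointwise constraints with the three linear equations: in cases~(i) and~(iii) a short elimination reduces the problem to finding vectors $(a_2, a_3)$ and $(b_2, b_3)$ that are simultaneously parallel and orthogonal, forcing $a = 0$ or $b = 0$; in case~(ii), the pointwise information is reduced to $a_2 b_3 = a_3 b_2$ alone, and the identities $\alpha = -1$, $\beta = 1$ force $a\times b = 0$ together with $a\cdot b = 0$, which again admits no nontrivial solution. The main obstacle is case~(ii), where the pointwise inequality degenerates and the contradiction must be extracted entirely from the two specific integral coefficients, requiring the key identification $\alpha = -1$ and $\beta = 1$.
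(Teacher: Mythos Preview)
Your proposal is correct and follows essentially the same route as the paper's own proof: specialize Lemma~\ref{LemmaAnisotropic} to the isotropic setting where $L=\operatorname{diag}(\lambda+2\mu,\mu,\mu)$, obtain the three-term identity for $\LL M{:}M-Q(M)$ (your coefficients $A,C$ are exactly the paper's $\frac{(\lambda+2\mu)^2-(\lambda+d/2)^2}{\lambda+2\mu}$ and $\frac{d(\mu-d/4)}{\mu}$ after factoring), reduce the integral conditions \eqref{integrEqs} to the same three linear relations, and run the contradiction by cases. The paper's case split is your (ii) first (ruling out $d=4\mu$ a.e.), then the remark ``similarly $d>0$'' disposes of your case~(i), and the remainder is your case~(iii); your identification $(\alpha,\beta)=(-1,1)$ in case~(ii) is precisely what the paper exploits in its system~\eqref{3CondsContradiction}. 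The only cosmetic differences are that you verify \eqref{CondLM:M+D:Cof(M)>0} directly via the $M=M^a+\tfrac{\tr M}{3}I_3+M^s_0$ decomposition (the paper instead cites Guti\'errez for the equivalence with \eqref{condLemmaIsotr}), and your three-case bookkeeping is a bit more explicit.
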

Thanks to the previous lemmas, we are now able to demonstrate the main result of this section.
\begin{proof}[Proof of Theorem \ref{ThNoLossFirstLaminate}]
Firstly, assume that \eqref{CondLM:M+D:Cof(M)>0} is satisfied with the matrix $D$ being of the type $D=dI_3$ for some $d\in \mathbb{R}$. This is equivalent to condition \eqref{condLemmaIsotr}, as it was proved by Guti\'errez in \cite[Section 4.2]{G01}. By virtue of Lemma \ref{LemNoLossStrEllipIsotrop}, $\LL^0$ is strongly elliptic, which concludes the proof in this case.
\par
In the sequel we will show that if there exists a constant matrix $D\in\mathbb{R}^{3\times3}$ such that condition \eqref{CondLM:M+D:Cof(M)>0} is fulfilled, then there exists a constant $d\in \mathbb{R}$ such that \eqref{CondLM:M+D:Cof(M)>0} holds with $D=dI_3$. This combined with Lemma~\ref{LemNoLossStrEllipIsotrop} implies that $\LL^0$ is strongly elliptic.
\par
Assume that \eqref{CondLM:M+D:Cof(M)>0} holds for some matrix $D\in\mathbb{R}^{3\times3}$, namely for any $M\in\mathbb{R}^{3\times3}$,  we have a.e. in $Y_1$,
\begin{align*}
0\le &\ \lambda(M_{11}+M_{22}+M_{33})^2 \\
& + 2\mu\left(M_{11}^2 + M_{22}^2 + M_{33}^2 + 2\left[ \left( M_{12} + M_{21} \over 2 \right)^2 + \left( M_{13} + M_{31} \over 2 \right)^2 + \left( M_{23} + M_{32} \over 2 \right)^2 \right] \right) \\
&+ D_{11}(M_{22}M_{33} - M_{23}M_{32}) - D_{12}(M_{21}M_{33} - M_{23}M_{31}) + D_{13}(M_{21}M_{32} - M_{22}M_{31}) \\
& - D_{21}(M_{12}M_{33} - M_{13}M_{32}) + D_{22}(M_{11}M_{33} - M_{13}M_{31}) - D_{23}(M_{11}M_{32} - M_{12}M_{31}) \\
& + D_{31}(M_{12}M_{23} - M_{13}M_{22}) - D_{32}(M_{11}M_{23} - M_{13}M_{21}) + D_{33}(M_{11}M_{22} - M_{12}M_{21}).
\end{align*}
The previous condition is equivalent to the following matrix being positive semi-definite a.e. in $Y_1$
\begin{equation*}
\begin{pmatrix}
\lambda + 2\mu & \lambda + {D_{33}\over2} & \lambda + {D_{22}\over2} &0&0&0&0& -{D_{32}\over2} & -{D_{23}\over2} \\ \\
\lambda + {D_{33}\over2} & \lambda+2\mu & \lambda + {D_{11}\over2} &0&0&-{D_{31}\over 2} & {D_{13}\over 2} & 0 & 0 \\ \\
\lambda + {D_{22}\over 2} & \lambda + {D_{11}\over 2} & \lambda + 2\mu & -{D_{21}\over 2} & -{D_{12}\over 2} &0 &0&0&0 \\ \\
0 & 0 & -{D_{21}\over 2} & \mu & \mu - {D_{33}\over 2} & 0 &  {D_{23}\over 2} & {D_{31}\over 2} & 0 \\  \\
 0 & 0 & -{D_{12}\over 2} & \mu - {D_{33}\over 2} & \mu & {D_{32}\over 2} & 0 & 0 & {D_{13}\over 2} \\ \\
 0 & -{D_{31}\over 2} & 0 & 0 & {D_{32}\over 2} & \mu & \mu - {D_{22}\over 2} & 0 & {D_{21}\over 2} \\ \\
 0 & -{D_{13}\over 2} & 0 & {D_{23}\over 2} & 0 & \mu - {D_{22}\over 2} & \mu & {D_{12}\over 2} & 0 \\ \\
 -{D_{32}\over 2} & 0 & 0 & {D_{31}\over 2} & 0 & 0 & {D_{12}\over 2} & \mu & \mu - {D_{11}\over 2} \\ \\
 -{D_{23}\over 2} & 0 & 0 & 0 &{D_{13}\over 2} & {D_{21}\over 2} & 0 & \mu - {D_{11}\over 2} & \mu
\end{pmatrix}.
\end{equation*}
In particular, this implies that the following matrices are positive semi-definite a.e. in $Y_1$:
\begin{equation}\label{2x2Matrix}
\begin{pmatrix}
\mu & \mu - {D_{ii}\over2} \\ \\
\mu-{D{ii}\over2} & \mu
\end{pmatrix} \quad\mbox{for }i=1,2,3,
\end{equation}
\begin{equation}\label{3x3Matrix}
B:=
\begin{pmatrix}
\lambda + 2\mu & \lambda + {D_{33}\over2} & \lambda + {D_{22}\over2} \\ \\
\lambda + {D_{33}\over 2} & \lambda + 2\mu & \lambda + {D_{11}\over 2} \\ \\
\lambda + {D_{22}\over 2} & \lambda + {D_{11}\over 2} & \lambda + 2\mu
\end{pmatrix}.
\end{equation}
\par
Now, we will prove that there exists $i\in \{1,2,3\}$ such that
\begin{equation}\label{CondContradiction}
-\essinf_{y_1\in Y_1}\{ 2\mu(y_1) + 3\lambda(y_1) \}\le D_{ii} \le 4\essinf_{y_1 \in Y_1} \{ \mu(y_1) \}.
\end{equation}
Note that we can assume 
\begin{equation}\label{LNotVSE}
 \essinf_{y_1\in Y_1} \{2\mu(y_1) + 3\lambda(y_1)\} < 0.
\end{equation}
Otherwise, since the matrix \eqref{2x2Matrix} is positive semi-definite, or equivalently
\begin{equation}\label{0Dii4mu}
0\leq D_{ii} \leq 4\essinf_{y_1\in Y_1}\{\mu({y_1})\} \quad\mbox{for }i=1,2,3,
\end{equation} 
condition \eqref{CondContradiction} holds immediately.
\par
We assume by contradiction that \eqref{CondContradiction} is violated for any $i=1,2,3$. Since the matrix $B$ defined by \eqref{3x3Matrix} is positive semi-definite, we get for any $i=1,2,3$,
\begin{equation*}
\begin{vmatrix}
\lambda + 2\mu & \lambda + {D_{ii}\over 2} \\ \\
\lambda + {D_{ii}\over2} & \lambda + 2\mu
\end{vmatrix}
\ge 0 \quad \mbox{a.e. in }Y_1,
\end{equation*}
which is equivalent to 
\begin{equation*}
-\,4\essinf_{y_1\in Y_1}  \{ \mu(y_1) + \lambda(y_1) \}\le D_{ii} \le 4 \essinf_{y_1\in Y_1} \{ \mu(y_1) \} \quad\mbox{for }i=1,2,3.
\end{equation*}
Since by assumption \eqref{CondContradiction} is not satisfied for any $i=1,2,3$ and \eqref{0Dii4mu} holds, then the previous condition yields
\begin{equation}\label{BoundsOnDii}
-4\essinf_{y_1\in Y_1} \{ \mu(y_1)  + \lambda(y_1)\} \le D_{ii} < -\essinf_{y_1\in Y_1}\{2\mu(y_1) + 3\lambda(y_1)\}\quad\mbox{for }i=1,2,3.
\end{equation}
Set $d:=\max_{i=1,2,3}\{D_{ii}\}$. By \eqref{BoundsOnDii} there exists $\varepsilon>0$ such that
\begin{equation}\label{CondEpsilon}
d + \varepsilon < -\essinf_{y_1\in Y_1}\{2\mu(y_1) + 3\lambda(y_1)\}.
\end{equation}
Define the set $P_\varepsilon\subset Y_1$ by
\begin{equation*}
P_\varepsilon:=\left\{x_1\in Y_1: 2\mu(x_1) + 3\lambda(x_1) < \essinf_{y_1\in Y_1} \{2\mu(y_1) + 3\lambda(y_1)\} + \varepsilon   \right\}.
\end{equation*}
It is clear that $|P_\varepsilon|>0$, and from \eqref{CondEpsilon} and the definition of $P_\varepsilon$ we obtain
\begin{equation*}
d + \varepsilon  < -\essinf_{y_1\in Y_1} \{2\mu(y_1) + 3\lambda(y_1)\} < - \big(2\mu(x_1) + 3\lambda(x_1)\big) + \varepsilon\quad \mbox{a.e. } x_1\in P_\varepsilon,
\end{equation*}
which leads to
\begin{equation}\label{Lambda+Dii/2<0}
\lambda(x_1) + {d\over2} < -{1\over2}\big(\lambda(x_1)+2\mu(x_1)\big)<0\quad \mbox{a.e. }x_1\in P_\varepsilon.
\end{equation} 
Since the matrix $B$ from \eqref{3x3Matrix} is positive semi-definite, then, its determinant is non-negative a.e. in $Y_1$. In particular we have
\begin{equation}\label{DetNoNegative}
\begin{aligned}
0\leq \det\big(B(x_1)\big)=&\ \big(\lambda(x_1) + 2\mu(x_1)\big)^3 + 2\left(\lambda(x_1) + {D_{11}\over2}\right)\left(\lambda(x_1) + {D_{22}\over2}\right)\left(\lambda(x_1) + {D_{33}\over2}\right) \\
	& - \big(\lambda(x_1) + 2\mu(x_1)\big)\left[ \left(\lambda(x_1) + {D_{11}\over2}\right)^2 + \left(\lambda(x_1) + {D_{22}\over2}\right)^2 + \left(\lambda(x_1) + {D_{33}\over2}\right)^2 \right],
\end{aligned}
\end{equation}
a.e. $x_1\in P_\varepsilon$. Then, it follows that
\begin{equation}\label{|B|<=number}
\det\big(B(x_1)\big)\le \big(\lambda(x_1) + 2\mu(x_1)\big)^3 + 2\left( \lambda(x_1) +  {d\over2}\right)^3 - 3\big(\lambda(x_1) + 2\mu(x_1)\big)\left( \lambda(x_1) + {d\over2} \right)^2\quad \mbox{a.e. }x_1\in P_\varepsilon.
\end{equation}
To derive a contradiction let us show that the right-hand side of inequality \eqref{|B|<=number} is negative.
By \eqref{Lambda+Dii/2<0} we get
\begin{equation*}
4\left( \lambda(x_1) + {d\over2} \right)^2 > \big(\lambda(x_1) + 2\mu(x_1)\big)^2 \quad\mbox{a.e. }x_1\in P_\varepsilon,
\end{equation*}
which, multiplying by $\lambda(x_1) + 2\mu(x_1) > 0$, leads to
\begin{equation*}
\big(\lambda(x_1) + 2\mu(x_1)\big)^3 - 4\big(\lambda(x_1) + 2\mu(x_1)\big)\left( \lambda(x_1) + {d\over2} \right)^2 <0\quad\mbox{a.e. }x_1\in P_\varepsilon.
\end{equation*}
Again using $\eqref{Lambda+Dii/2<0}$ we deduce that 
\begin{equation*}
2\left(\lambda(x_1) + {d\over2}\right)^3 < -\big(\lambda(x_1) + 2\mu(x_1)\big)\left( \lambda(x_1) + {d\over2} \right)^2\quad\mbox{a.e. }x_1\in P_\varepsilon.
\end{equation*}
Adding the two last inequalities we obtain
\begin{equation*}
\big(\lambda(x_1) + 2\mu(x_1)\big)^3 + 2\left( \lambda(x_1) +  {d\over2}\right)^3 - 3\big(\lambda(x_1) + 2\mu(x_1)\big)\left( \lambda(x_1) + {d\over2} \right)^2 < 0\quad\mbox{a.e. }x_1\in P_\varepsilon,
\end{equation*}
which by \eqref{|B|<=number} implies that $\det(B)<0$ in $P_\varepsilon$, a contradiction with \eqref{DetNoNegative}.
\par
Therefore, condition \eqref{CondContradiction} is satisfied by $D_{ii}\geq 0$ (due to \eqref{0Dii4mu}) for some $i=1,2,3$. Hence, condition \eqref{condLemmaIsotr} holds with $d=D_{ii}$, or equivalently \eqref{CondLM:M+D:Cof(M)>0} is satisfied by the matrix $D_{ii}I_3$, which concludes the proof.
\end{proof}
Now, let us prove the auxiliary results of the section.
\begin{proof}[Proof of Lemma \ref{LemmaAnisotropic}]
Let $M\in \mathbb{R}^{3\times 3}$ be a rank-one matrix. Then, we have $\det(M)= 0$, and $\adj_{ii}(M) = 0$ for $i=1,2,3$. Therefore, we get
\begin{equation}\label{L0MM>=0}
\begin{aligned}
\LL^0M:M &=
 	\min\left\{ \int_{Y_3} \LL(M+\nabla\varphi):(M+\nabla\varphi)\,dy: \varphi \in H^1_{\per}(Y_3;\mathbb{R}^3)\right\} \\
	& = \min\left\{ \int_{Y_3} \big( \LL(M+\nabla\varphi)(M+\nabla\varphi) + dI_3:\Cof(M+\nabla\varphi): \varphi \in H^1_{\per}(Y_3;\mathbb{R}^3)			 \big)\, dy \right\} \ge 0.
\end{aligned}
\end{equation}
Take $\varphi= \varphi(y_1)=(\varphi_1,\varphi_2,\varphi_3)\in C^1_{\per}(Y_1;\mathbb{R}^3)$. Then, the matrix
\begin{equation*}
\nabla\varphi = \varphi'\otimes e_1 = \varphi'_1(e_1\otimes e_1) + \varphi'_2(e_2\otimes e_1) + \varphi'_3(e_3\otimes e_1),
\end{equation*}
is a rank-one (or the null) matrix. Also, note that
\[\adj_{ij}(M) = (-1)^{i+j}\det(\tilde{M}^{ji}).\]
Considering the previous expressions, from \eqref{CondLM:M+D:Cof(M)>0} it follows that
\begin{align*}
	0\le &\ \LL (M+\nabla\varphi):(M+\nabla\varphi)+ d \sum_{i=1}^3 \adj_{ii}(M+\nabla\varphi) \\
=&\	\LL M:M + 2\LL M:(e_1\otimes e_1)\varphi'_1+2\LL M:(e_2\otimes e_1)\varphi'_2 + 2\LL M:(e_3\otimes e_1)\varphi'_3 + l_1(\varphi'_1)^2 + 2l_{12}\varphi'_1\varphi'_2   \\
 &+ 2l_{13}\varphi'_1\varphi'_3+ l_2(\varphi'_2)^2 + 2l_{23}\varphi'_2\varphi'_3 + l_2(\varphi'_3)^2+ d(M_{33}\varphi'_1 - M_{13}\varphi'_3 + M_{22}\varphi'_1 - M_{12}\varphi'_2) \\
 = & \  \LL M:M + l_1(\varphi'_1)^2 + l_2(\varphi'_2)^2 + l_3(\varphi'_3) + 2l_{12}\varphi'_1\varphi'_2 + 2l_{13}\varphi'_1\varphi'_3 + 2l_{23}\varphi'_2\varphi'_3 \\
 &\ \big[2\LL M:(e_1\otimes e_2) + d(M_{33}+dM_{22})\big]\varphi'_1 + \big[2\LL M:(e_2\otimes e_1) - dM_{12}\big]\varphi'_2 + \big[2\LL M:(e_3\otimes e_1) - dM_{13}\big]\varphi'_3.
\end{align*}
For the previous equalities we have used that
\[
\adj_{ii}(A+B) = \adj_{ii}(A) +\adj_{ii}(B) + \Cof (\tilde A^{ii}):\tilde B^{ii}.
\]
The purpose is to rewrite the last expression as the sum of squares. With that in mind, one obtains
\begin{equation}\label{0<L-Q+more}
\begin{aligned}
	0\le &\ \LL (M+\nabla\varphi):(M+\nabla\varphi)+ dI_3:\Cof(M+\nabla\varphi) \\
	= & \ \LL M:M - Q(M) + l_1\left[ \varphi'_1 + {l_{12} \over l_1}\varphi'_2 + {l_{13}\over l_1}\varphi'_3 + {1\over l_1} \left(\LL M:(e_1\otimes e_1) + {d\over 2}{M_{22}} + {d\over 2}{M_{33}}\right) \right]^2 \\
	& + {\det(\tilde{L}^{33}) \over  l_1}\left[ \varphi'_2 + {\det(\tilde{L}^{23}) \over \det(\tilde{L}^{33})} \varphi'_3 - {l_{12} \over \det(\tilde{L}^{33})}\left(\LL M:(e_1\otimes e_1) + {d\over 2}{M_{22}} + {d\over 2}{M_{33}}\right) \right. \\
	& \left. + {l_1\over \det(\tilde{L}^{33})} \left(\LL M:(e_2\otimes e_1) - {d\over 2}{M_{12}}\right) \right]^2 \\
	& + {\det(L)\over \det(\tilde{L}^{33})} \left[ \varphi'_3 + {\det(\tilde{L}^{13}) \over \det(L)}\left(\LL M:(e_1\otimes e_1) + {d\over 2}{M_{22}} + {d\over 2}{M_{33}} \right)  - {\det(\tilde{L}^{23})\over\det(L)}\left( \LL M:(e_2\otimes e_1) - {d\over 2}{M_{12}} \right)\right. \\
	&\left.+ {\det(\tilde{L}^{33})\over \det(L)}\left(\LL M:(e_3\otimes e_1) - {d\over 2}{M_{13}}\right)\right]^2.
\end{aligned}
\end{equation}
Since $\varphi'_1,\varphi'_2$ and $\varphi'_3$ can be chosen arbitrarily, the three square brackets in the previous equality can be equated to 0 at any Lebesgue point $y_1\in Y_1$ of $\LL$, and thus \eqref{L>Q} holds. Using a density argument the previous equality also holds a.e. in $Y_1$, for any $\varphi\in H^1_{\per}(Y_1;\mathbb{R}^3)$.
\par
Now, we are going to prove the second part of Lemma~\ref{LemmaAnisotropic}. Assume $\LL^0$ is not strongly elliptic. Then, there exists a rank-one matrix $M$ such that $\LL^0M:M = 0$. Taking into account expressions \eqref{L0MM>=0} the minimizer $v_M$ associated with $\LL^0M:M$ (see \cite[Lemma~3.2]{BF01}) satisfies $v_M=v_M(y_1)$ and
\begin{align*}
	0 	& = \LL^0M:M 
	   	 = \int_{Y_1} \LL(t)(M + v'_M(t)\otimes e_1):(M + v'_M(t)\otimes e_1)dt \\
		& = \int_{Y_1}\big[ \LL(t)(M+\nabla v_M(t)):(M+\nabla v_M(t)) +  dI_3:\Cof(M+\nabla v_M)
	 \big]dt.
\end{align*}
The first inequality in \eqref{0<L-Q+more} implies that the integrand of the previous expression must be pointwisely 0, and thus the inequality in \eqref{0<L-Q+more} for $\varphi=v_M$ is actually an equality. From this we deduce
\begin{equation*}
\LL M:M = Q(M),
\end{equation*}
and
\begin{equation}\label{pointwEqs}
\left\{
\begin{aligned}
	0=&\ (v_M')_1 + {l_{12}\over l_1}(v_M')_2 + {l_{13}\over l_1}(v_M')_3 + {1\over l_1}\left( \LL M:(e_1\otimes e_1) + {d\over 2}{M_{22}} + {d\over 2}{M_{33}} \right),\\
	0=&\ (v_M')_2 + {\det(\tilde{L}^{23})\over \det(\tilde{L}^{33})}(v_M')_3 - {l_{12}\over \det(\tilde{L}^{33})}\left( \LL M:(e_1\otimes e_1) + {d\over 2}{M_{22}} + {d\over 2}{M_{33}} \right) \\ 
	& + {l_1 \over \det(\tilde{L}^{33})}\left( \LL M:(e_2\otimes e_1) - {d\over 2}{M_{12}} \right),\\
	0=&\ (v_M')_3 + {\det(\tilde{L}^{13})\over \det(L)}\left( \LL M:(e_1\otimes e_1) + {d\over 2} {M_{22}} + {d\over 2}{M_{33}}\right) - {\det(\tilde{L}^{23})\over \det(L)} \left( \LL M:(e_2\otimes e_1) - {d\over 2}{M_{12}}  \right)\\
	& + {\det(\tilde{L}^{33})\over \det(L)} \left( \LL M:(e_3\otimes e_1) - {d\over 2}{M_{13}} \right).
\end{aligned}
\right.
\end{equation}
Since $v_M$ is $Y_1$-periodic, we have
\[\int_{Y_1} (v_M')_i \  dy_1 = 0\quad i=1,2,3.\]
Integrating the third equality in \eqref{pointwEqs} we obtain the first equality in \eqref{integrEqs}. Replacing $(v_M')_3$ in the second equality of \eqref{pointwEqs}, we end up getting the second equality in \eqref{integrEqs}. Finally, replacing $(v_M')_2$ and $(v_M')_3$ in the first equality of \eqref{pointwEqs} it yields the last equality in \eqref{integrEqs}.\newline\indent
Conversely, let us assume that equalities \eqref{LM:M=Q(M)} and \eqref{integrEqs} hold. Considering the first equation in \eqref{integrEqs}, taking into account that the all the integrands belong to $L^\infty(Y_1)$, there exists a function $\varphi_3\in W^{1,\infty}_{\rm per}(Y_1)$ such that, a.e. in $Y_1$, it holds
\begin{align*}
0 = &\ \varphi_3'+ {\det(\tilde{L}^{13})\over \det(L)}\left( \LL M:(e_1\otimes e_1) + {d\over 2}{M_{22}} + {d\over 2}{M_{33}} \right) - {\det(\tilde{L}^{23})\over \det(L)}\left( \LL M:(e_2\otimes e_1) - {d\over 2}{M_{12}} \right) \\
& + {\det(\tilde{L}^{33})\over \det(L)}\left( \LL M:(e_3\otimes e_1) - {d\over 2}{M_{13}} \right).
\end{align*}
Repeating the argument with the second and the third equation of \eqref{integrEqs}, we get the existence of functions $\varphi_2$ and $\varphi_1$ in $W^{1,\infty}_{\rm per}(Y_1)$ respectively, such that
\begin{align*}
&\ \varphi'_2 + {\det(\tilde{L}^{23})\over \det(\tilde{L}^{33})}\varphi'_3 - {l_{12}\over \det(\tilde{L}^{33})}\left( \LL M:(e_1\otimes e_1) + {d\over 2}{M_{22}} + {d\over 2}{M_{33}} \right) + {l_1 \over \det(\tilde{L}^{33})}\left( \LL M:(e_2\otimes e_1) - {d\over 2}{M_{12}} \right) = 0,\\
&\ \varphi'_1 + {l_{12}\over l_1}\varphi'_2 + {l_{13}\over l_1}\varphi'_3 + {1\over l_1}\left( \LL M:(e_1\otimes e_1) + {d\over 2}{M_{22}} + {d\over 2}{M_{33}} \right) = 0.
\end{align*}
These three equalities together with \eqref{LM:M=Q(M)} imply the equality in \eqref{0<L-Q+more}, and thus by \eqref{L0MM>=0} it follows that
\begin{equation*}
	0  = \int_{Y_1} \big(\LL(M+\nabla \varphi): (M + \nabla\varphi) + dI_3:\Cof(M+\nabla\varphi)
	 \big)
	 \, dy_1 \\
	  \ge \LL^0 M:M \\
	  \ge 0,
\end{equation*}
which shows that $\LL^0$ is not strongly elliptic.
\par
Finally, due to the equality $\LL^0M:M=\LL^0M^T:M^T$, conditions \eqref{LM:M=Q(M)} and \eqref{integrEqs} are equivalent to the similar equalities replacing $M$ by $M^T$.
\end{proof}
\begin{proof}[Proof of Lemma \ref{LemNoLossStrEllipIsotrop}]
Since $\LL$ is isotropic, condition \eqref{condLemmaIsotr} is equivalent to the condition \eqref{CondLM:M+D:Cof(M)>0} with $D=dI_3$. As a consequence, \eqref{condLemmaIsotr} implies $\Lambda(\LL)\ge0$. By \cite[Corollary 3.5]{GMT}, we have $\alpha_{\se}(\LL^0)\ge\Lambda(\LL)$. Therefore, we get that $\alpha_{\se}(\LL^0)\ge 0$.
\par
Assume that $\LL^0$ is not strongly elliptic, {\em i.e.} $\alpha_{\se}(\LL^0)= 0$. Then, there exists a rank-one matrix $M:=\xi \otimes \eta$ in $\mathbb{R}^{3\times 3}$, with $\xi,\eta\in\RR^3\setminus\{0\}$, such that $\LL^0M:M=0$.
\par
Since $\LL$ is isotropic, the matrix $L$ defined in \eqref{DefMatrixL} is
\begin{equation*}
L = 
\begin{pmatrix}
\lambda + 2\mu & 0 & 0\\
0 & \mu & 0 \\
0 & 0 & \mu
\end{pmatrix}.
\end{equation*}
Moreover, the following equalities hold
\begin{equation*}
\begin{array}{c}
M_{ij} =\xi_i\eta_j\quad i,j\in\{ 1,2,3 \}, \\
 \LL M:(e_1\otimes e_1)  = (\lambda + 2\mu)\xi_i\eta_1 + \lambda(\xi_2\eta_2 + \xi_3\eta_3), \\
 \LL M:(e_2\otimes e_1) = \mu(\xi_1\eta_2 + \xi_2\eta_1), \\
  \LL M:(e_3\otimes e_1) = \mu(\xi_1\eta_\cdot + \xi_3\eta_1),\\
  \LL M:M = (\lambda+\mu)(\xi:\eta)^2 + \mu|\xi|^2|\eta|^2.
 \end{array}
\end{equation*}
Because $\LL^0M:M = 0$, from equalities \eqref{LM:M=Q(M)} and \eqref{integrEqs} in Lemma \ref{LemmaAnisotropic} we obtain a.e. in $Y_1$
\begin{equation}\label{LM:M=Q(M)Isotr}
\begin{aligned}
 (\lambda+\mu)(\xi:\eta)^2 + \mu|\xi|^2|\eta|^2 =& {1\over \lambda+2\mu} \left[ (\lambda+2\mu)\xi_1\mu_1 + \lambda(\xi_2\eta_2 + \xi_3\eta_3) + {d\over2}(\xi_2\eta_2 + \xi_3\eta_3) \right]^2\\
 &+ {1\over\mu}\left[ \mu(\xi_1\eta_2 + \xi_2\eta_1) - {d\over2}\xi_1\eta_2 \right]^2 + {1\over \mu}\left[ \mu(\xi_1\eta_3 + \xi_3\eta_1) - {d\over 2}\xi_1\eta_3 \right]^2,
\end{aligned}
\end{equation}
together with
\begin{align}
0 & = \xi_1\eta_3+\xi_3\eta_1-{\xi_1\eta_3\over2}\int_{Y_1}{d\over \mu}(t)\,dt, \label{intregrEqsIsotr1} \\
0 & = \xi_1\eta_2+\xi_2\eta_1-{\xi_1\eta_2\over2}\int_{Y_1}{d\over \mu}(t)\,dt, \label{intregrEqsIsotr2} \\
0 & = \xi_1\eta_1 + (\xi_2\eta_2 + \xi_3\eta_3)\int_{Y_1} {\lambda+{d\over2} \over \lambda + 2\mu}(t)\,dt. \label{intregrEqsIsotr3}
\end{align}
After some calculations, from \eqref{LM:M=Q(M)Isotr} we get
\begin{equation}\label{LM:M=Q(M)IsotrReduced}
{(\lambda + 2\mu)^2 - (\lambda+{d\over2})^2 \over \lambda+2\mu}(\xi_2\eta_2 + \xi_3\eta_3)^2 + \mu(\xi_2\eta_3 - \xi_3\eta_2)^2 + {d(\mu-{d\over4}) \over \mu}\xi_1^2(\eta_2^2 + \eta_3^2) = 0 \quad\mbox{a.e. in }Y_1.
\end{equation}
Observe that, since $\LL$ is isotropic and (strictly) strongly elliptic in $Y_1$, we have
\[
\mu>0,\ 2\mu+\lambda>0\quad\mbox{a.e. in }Y_1,
\]
which implies that
\[
(\lambda + 2\mu)^2 - \left(\lambda+{d\over2}\right)^2\geq0\quad\mbox{a.e. in }Y_1.
\]
Hence, taking into account assumption \eqref{condLemmaIsotr}, equality \eqref{LM:M=Q(M)IsotrReduced} implies the following three conditions:
\begin{equation}\label{Cond1}
\left[ (\lambda+2\mu)^2-\left(\lambda+{d\over2}\right)^2 \right]( \xi_2\eta_2+\xi_3\eta_3 )^2 = 0  \quad\mbox{a.e. in }Y_1,
\end{equation}
\begin{equation}\label{Necessary1} 
\xi_2\eta_3 = \xi_3\eta_2,
\end{equation}
\begin{equation}\label{Cond3}
d\left( \mu - {d\over 4} \right)\xi_1^2(\eta_2^2 + \eta_3^2) = 0 \quad\mbox{a.e. in }Y_1.
\end{equation}
\par
We will now prove by contradiction that we cannot have $d=4\mu$ a.e. in $Y_1$. Otherwise, equalities \eqref{intregrEqsIsotr1}, \eqref{intregrEqsIsotr2} and \eqref{intregrEqsIsotr3} can be written as
\begin{equation}\label{3CondsContradiction}
\begin{cases}
0=\xi_1\eta_3 - \xi_3\eta_1, \\
0= \xi_1\eta_2 - \xi_2\eta_1, \\
0= \xi_1\eta_1 + \xi_2\eta_2 + \xi_3\eta_3.
\end{cases}
\end{equation}
Under these conditions, if $\eta_1\neq0$, then the first and second equalities of \eqref{3CondsContradiction} lead to
\begin{equation*}
\xi_3 = \eta_3 {\xi_1\over \eta_1},\quad \xi_2 = \eta_2 {\xi_1 \over\eta_1}.
\end{equation*}
Replacing $\xi_2$ and $\xi_3$ in the third equality in \eqref{3CondsContradiction}, we obtain
\begin{equation*}
\xi_1(\eta_1^2 + \eta_2^2 + \eta_3^2) = 0.
\end{equation*}
Since $\eta\neq 0$, we get $\xi_1 = 0$. This implies that $\xi_2=\xi_3=0$, a contradiction with $\xi\neq0$. Therefore, we have necessarily $\eta_1=0$. Moreover, using the two first equalities of \eqref{3CondsContradiction} and the fact that $\eta\neq0$, we obtain $\xi_1=0$. As a consequence, \eqref{3CondsContradiction} reduces to
\begin{equation}\label{3CondsContradReduced}
\xi_2\eta_2 + \xi_3\eta_3 = 0.
\end{equation}
 If $\eta_2\neq 0$, then using \eqref{Necessary1} we get
 \begin{equation*}
 \xi_3 = \xi_2 {\eta_3\over \eta_2},
 \end{equation*}
 and replacing $\xi_3$ in the previous equality, it yields
\begin{equation*}
\xi_2(\eta_2^2 + \eta_3^2) = 0.
\end{equation*}
Again, since $\eta\neq0$, we have $\xi_2 = 0$. Using \eqref{Necessary1} and the assumption $\eta_2 \neq 0$, it follows that $\xi_3 = 0$, again a contradiction with $\xi,\eta\neq 0$. Thus, we have necessarily $\eta_2 = 0$. Taking into account that $\eta_1=\eta_2=0$ we have $\eta_3\neq 0$, hence from \eqref{3CondsContradReduced} we deduce that $\xi_3= 0$. Now \eqref{Necessary1} is written as $\xi_2\eta_3=0$. However, recall that $\xi_1=\xi_3=\eta_1=\eta_2=0$. This implies that either $\xi=0$ or $\eta=0$, a contradiction.
\par
We have just shown that the set $\{ d < 4\mu\}$ has a positive Lebesgue measure. Similarly, we can check that~$d>0$. Using \eqref{Cond1} and \eqref{Cond3} together with $0<d\leq 4\mu$, we deduce that
\begin{equation*}
\xi_2\eta_2+\xi_3\eta_3 =\xi_1^2(\eta_2^2 + \eta_3^2) = 0,
\end{equation*}
which combined with \eqref{intregrEqsIsotr3} also gives $\xi_1\eta_1=0$.
As above, using the three previous equalities, \eqref{intregrEqsIsotr1}, \eqref{intregrEqsIsotr2} and \eqref{Necessary1}, we get a contradiction with the fact that $\xi,\eta\neq 0$. Therefore, we have proved that $\LL^0$ is strongly elliptic if \eqref{condLemmaIsotr} holds for some~$d$.
\end{proof}

\subsection{Rank-two lamination}\label{SubSecRank2Lam}

In the proof of Proposition \ref{GutProp} for dimension three \cite[Section 5.2]{G01}, Guti\'errez performed a rank-one laminate mixing a strongly elliptic but not semi-very strongly isotropic material $\LL_a$, and a very strongly elliptic isotropic material $\LL_b$. However, as it was noted at the beginning of the section, there are some cases for which the strong ellipticity of the homogenized tensor is not lost after this first lamination. In fact, our Theorem~\ref{ThNoLossFirstLaminate} shows that for a general rank-one laminate, it is not possible to lose the strong ellipticity through homogenization if there exists a matrix $D\in \mathbb{R}^{3\times 3}$ satisfying condition \eqref{CondLM:M+D:Cof(M)>0}. As done in \cite{G01}, we need to perform a second lamination with a third material $\LL_c$ which can be very strongly elliptic, in order to lose the strong ellipticity in those cases.
\par
Our purpose is to justify Guti\'errez' approach using formally $1^*$-convergence (see \cite[Section~3]{G01}), by a homogenization procedure using the $\Gamma$-convergence result of Theorem \ref{BBResult}.
\begin{theorem}\label{ThRankTwoLossStrEll}
For any strongly elliptic but not semi-very strongly elliptic isotropic tensor $\LL_a$ whose Lam\'e coefficients satisfy
\begin{equation}\label{CondMaterialA}
4\mu_a + 3\lambda_a > 0,
\end{equation}
there exist two very strongly elliptic isotropic tensors $\LL_b,\LL_c$ and volume fractions $\theta_1,\theta_2\in(0,1)$ such that the tensor $\LL_2$ obtained by laminating in the direction $y_2$ the effective tensor $\LL_1^*$ -- firstly obtained by laminating in the direction $y_1$ the tensors $\LL_a$, $\LL_b$ with proportions $\theta_1$, $1-\theta_1$ -- and the tensor $\LL_c$ with proportions $\theta_2$ and $1-\theta_2$ respectively, namely 
\begin{equation}\label{DefL2}
\LL_2(y_2):=\chi_2(y_2)\,\LL_1^* + \big(1-\chi_2(y_2)\big)\,\LL_c \quad\mbox{for } y_2\in Y_1,
\end{equation}
satisfies
\begin{equation}\label{Lambda(L2)=0}
\Lambda(\LL_2)=0,
\end{equation}
and
\begin{equation}\label{GammaLimRankTwo}
\int_\Omega \LL_2(x_2/\varepsilon)\nabla v : \nabla v\, dx \stackrel{\Gamma-H^1_0(\Omega)^3}\rightharpoonup \int_\Omega \LL_2^0\nabla v : \nabla v\, dx,
\end{equation}
where the homogenized tensor $\LL_2^0$ is not strongly elliptic, {\em i.e.}
\begin{equation}\label{LossOfEllipticityRankTwo}
\alpha_{\se}(\LL_2^0) = 0.
\end{equation}
\end{theorem}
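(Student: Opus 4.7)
The plan is to upgrade Guti\'errez's formal $1^*$-convergence construction from \cite[Section 5.2]{G01} into a bona fide homogenization statement via the $\Gamma$-convergence result of Theorem \ref{BBResult}. Starting from a strongly elliptic but not semi-very strongly elliptic $\LL_a$ satisfying $4\mu_a + 3\lambda_a > 0$, I would first choose an isotropic very strongly elliptic $\LL_b$ and $\theta_1 \in (0,1)$ so that the rank-one lamination formula in direction $e_1$ produces a constant anisotropic tensor $\LL_1^*$ which is strongly elliptic but lies in the borderline regime admitting a nearly degenerate rank-one direction transverse to $e_1$. The window for $\mu_b$ is provided by the trichotomy of \cite{G01} recalled just before Section \ref{SubSecRank1Lam}, and the hypothesis $4\mu_a+3\lambda_a>0$ guarantees that this window is non-empty. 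Then I would choose an isotropic very strongly elliptic $\LL_c$ and $\theta_2 \in (0,1)$ so that laminating $\LL_1^*$ with $\LL_c$ in direction $e_2$ produces, via the Murat-Tartar rank-one lamination formula, an effective tensor $\LL_2^*$ possessing a rank-one matrix $M_0 = \xi_0 \otimes \eta_0$ with $\LL_2^* M_0 : M_0 = 0$.

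Next, I would identify the tensor $\LL_2^0$ defined by \eqref{DefL0} with the Murat-Tartar tensor $\LL_2^*$. Since $\LL_2(y)$ depends only on $y_2$, the minimization in \eqref{DefL0} reduces to test functions of the form $M y + v(y_2)$, and the one-dimensional Euler-Lagrange system decouples into the same algebraic equations solved by $1^*$-convergence. This yields $\LL_2^0 = \LL_2^*$, and in particular $\alpha_{\se}(\LL_2^0) = 0$ by the construction in Step~1. This is the step~4 alluded to in the introduction.

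The heart of the proof is the verification that $\Lambda(\LL_2) \geq 0$, which is what allows Theorem \ref{BBResult} to produce the $\Gamma$-convergence \eqref{GammaLimRankTwo} without any $\Lambda_{\rm per}$ assumption. I would attempt this by a translation method adapted to the two-phase structure: rather than a single matrix $D$ as in \eqref{CondLM:M+D:Cof(M)>0}, one looks for phase-dependent matrices $D_1,D_c\in\RR^{3\times 3}$ such that
\[
\LL_1^* M : M + D_1 : \Cof(M) \geq 0, \qquad \LL_c M : M + D_c : \Cof(M) \geq 0, \qquad \forall\,M\in\RR^{3\times 3},
\]
and then absorbs the discrepancy $D_1-D_c$ by an integration by parts exploiting the one-dimensional dependence $\chi_2=\chi_2(y_2)$ and the fact that $\Cof(\nabla v)$ is divergence-free. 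The joint tuning of Steps~1--2 must be carried out compatibly with this translation, the inequality $4\mu_a + 3\lambda_a > 0$ being precisely what allows $D_1$ to exist on the $a$-phase side of the first lamination. Once $\Lambda(\LL_2) \geq 0$ is established, Theorem \ref{BBResult} gives \eqref{GammaLimRankTwo} with limit $\LL_2^0 = \LL_2^*$, and the reverse inequality that upgrades \eqref{Lambda(L2)=0} to an equality is obtained by plugging a test sequence of the form $v_\ep(x)=\ep\,\varphi(x)\,\psi(x_2/\ep)\,\xi_0$ (with $\varphi\in C_c^\infty$ and $\psi$ periodic) into the definition of $\Lambda$; this sequence concentrates its gradient on the degenerate rank-one direction $M_0$ and, via the $\Gamma$-convergence, forces $\Lambda(\LL_2)\leq\alpha_{\se}(\LL_2^0)=0$.

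The main obstacle is precisely this joint tuning: one must simultaneously ensure the algebraic degeneracy $\LL_2^* M_0 : M_0 = 0$ (which is a codimension-one condition on $\LL_b,\theta_1,\LL_c,\theta_2$) and the existence of the two-phase translation matrices $D_1,D_c$, while keeping all strong and very strong ellipticity constraints intact. Making the quadratic form on the left-hand side of the translation inequalities non-negative for the specific anisotropic $\LL_1^*$ produced by the first lamination — which, unlike the isotropic case treated in Lemma \ref{LemNoLossStrEllipIsotrop}, involves several independent Lam\'e-like parameters — is the subtle algebraic bookkeeping that drives the whole proof.
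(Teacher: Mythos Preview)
Your overall strategy matches the paper's: Guti\'errez's rank-two construction combined with Theorem~\ref{BBResult}. Two of your steps, however, diverge from the paper in ways that create genuine gaps.

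\textbf{Translation for $\Lambda(\LL_2)\ge0$.} You propose phase-dependent matrices $D_1,D_c$ and plan to ``absorb the discrepancy $D_1-D_c$'' via an integration by parts using the divergence-free structure of $\Cof(\nabla v)$. This does not work in general: the cofactor $(\Cof\nabla v)_{ij}$ is a divergence in the variables $y_k$, $k\neq j$, so a weight depending on $y_2$ kills $\int D_{ij}(y_2)(\Cof\nabla v)_{ij}$ only when $j=2$. Unless you can force $D_1-D_c$ to be supported on the second column, the residual $\int\chi_2(y_2)(D_1-D_c):\Cof(\nabla v)$ is uncontrolled. The paper avoids this entirely by finding a \emph{single} (non-isotropic) matrix $D=\mathrm{diag}(4\mu_c,0,0)$ that makes $\LL_1^*M:M+D:\Cof(M)\ge0$ and $\LL_c M:M+D:\Cof(M)\ge0$ simultaneously; the verification is a direct positive-semidefiniteness check on two explicit $3\times3$ matrices, and the bounds $-(C+2D)\le\mu_c\le C$ needed for this are guaranteed by the specific choices of $\mu_c,\alpha_c,\theta_2$.

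\textbf{Identification $\LL_2^0=\LL_2^*$.} Your plan to reduce the cell minimization \eqref{DefL0} to test functions $v=v(y_2)$ and solve a one-dimensional Euler--Lagrange system presupposes that the infimum is attained by such functions. That reduction (cf.\ \cite[Lemma~3.2]{BF01}) is only justified under $\Lambda_{\per}(\LL_2)>0$, which the paper explicitly declines to verify (``seems very intricate to check''). Without it the infimum may not be attained, and Jensen-type averaging in $(y_1,y_3)$ fails because $\LL_1^*$ is not very strongly elliptic. The paper instead perturbs to $\LL_\delta=\LL_2+\delta\,\mathbb{I}_s$, for which $\Lambda(\LL_\delta)>0$ so that $\LL_\delta^0=\LL_\delta^*$ by \cite[Lemmas~3.1--3.2]{BF01}, and then passes to the limit $\delta\to0$ on both sides using the stability of the $1^*$-formulas and the convergence $\LL_\delta^0\to\LL_2^0$ already established in Theorem~\ref{BBResult}. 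Finally, the reverse inequality $\Lambda(\LL_2)\le0$ needs no test sequence: it follows at once from $\alpha_{\se}(\LL_2^0)\ge\Lambda(\LL_2)$ (\cite[Corollary~3.5]{GMT}) and $\alpha_{\se}(\LL_2^0)=0$.
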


\begin{remark}
Theorem \ref{ThRankTwoLossStrEll} shows that for certain strongly elliptic but not very strongly elliptic isotropic tensors, namely those whose Lam\'e parameters satisfy \eqref{CondMaterialA}, it is possible to find two very strongly elliptic isotropic tensors for which the homogenization process through $\Gamma$-convergence using a rank-two lamination leads to the loss of ellipticity of the effective tensor.
\end{remark}

\begin{proof}[Proof of Theorem \ref{ThRankTwoLossStrEll}]
We divide the proof into four steps.
\par\medskip\noindent
{\it Step 1.} Choice of $\LL_a$, $\LL_b$, $\theta_1$, $\theta_2$.
\par\noindent
Let $\LL_a$ be a strongly elliptic but not semi-very strongly elliptic isotropic tensor satisfying~\eqref{CondMaterialA}. Our aim is to find two very strongly isotropic tensors $\LL_b,\LL_c$ and two volume fractions $\theta_1,\theta_2$ such that the strong ellipticity is lost through homogenization using a rank-two lamination.
\par
Let $\chi_1, \chi_2:\RR\to\{0,1\}$ be two $1$-periodic characteristic functions such that
\[
\int_{Y_1}\chi_1(y_1)\, dy_1 = \theta_1\quad\mbox{and}\quad \int_{Y_1}\chi_2(y_2)\, dy_2 = \theta_2,
\]
where $\theta_1,\theta_2 \in (0, 1)$ will be chosen later.
\par
The $1^*$-convergence procedure of \cite[Section 5.2]{G01} applied to the tensor
\begin{equation}\label{FirstLam}
\LL_1(y_1):=\chi_1({y_1})\,\LL_a + \big(1-\chi_1({y_1})\big)\,\LL_b\quad\mbox{for }y_1\in Y_1,
\end{equation}
yields a non-isotropic effective tensor $\LL_1^*$. The computations of \cite[Section 5.2]{G01} lead to an explicit expression of the tensor $\LL_1^*$ whose non-zero entries are
\begin{equation}\label{ExpressionOfL1*}
\begin{aligned}
&(\LL_1^*)_{1111}={1\over A},\\
&(\LL_1^*)_{1122} = (\LL_1^*)_{2211} = (\LL_1^*)_{1133} = (\LL_1^*)_{3311} = {B\over A},\\
&(\LL_1^*)_{1212} = (\LL_1^*)_{1221} = (\LL_1^*)_{2112} = (\LL_1^*)_{2121} = {1\over E},\\
&(\LL_1^*)_{1313} = (\LL_1^*)_{1331} = (\LL_1^*)_{3113} = (\LL_1^*)_{3131} = {1\over E},\\
&(\LL_1^*)_{2222} = {B^2 \over A} + 2(C+D),\\
& (\LL_1^*)_{2233} = (\LL_1^*)_{3322} = {B^2\over A} + 2D,\\
&(\LL_1^*)_{2323} = (\LL_1^*)_{2332} = (\LL_1^*)_{3223} = (\LL_1^*)_{3232} = C,\\
&(\LL_1^*)_{3333} = {B^2\over A} + 2(C+D),
\end{aligned}
\end{equation}
where 
\begin{equation}\label{DefABCDE}
\begin{aligned}
& A = {\theta_1\over 2\mu_a + \lambda_a} + {1-\theta_1 \over 2\mu_b + \lambda_b},\\
& B = {\theta_1 \lambda_a \over 2\mu_a + \lambda_a} + {(1-\theta_1)\lambda_b\over 2\mu_b + \lambda_b},\\
& C = \theta_1\mu_a + (1-\theta_1)\mu_b,\\
& D = {\theta_1 \mu_a\lambda_a\over 2 \mu_a + \lambda_a} + {(1-\theta_1)\mu_b\lambda_b\over 2\mu_b + \lambda_b},\\
& E = {\theta_1\over \mu_a} + {1-\theta_1\over \mu_b}.
\end{aligned}
\end{equation}
\par
Now, let us specify the choice of the two very strongly elliptic isotropic tensors $\LL_b$, $\LL_c$, and the volume fractions $\theta_1$, $\theta_2$. For the Lam\'e parameters of material $c$ we denote $\lambda_c = \alpha_c \mu_c$ as done in \cite{G01}. We assume that
\begin{equation}\label{CondMuB}
-{1\over 4}(2\mu_a + 3\lambda_a) \le \mu_b < {\mu_a(2\mu_a + 3\lambda_a) \over 3\lambda_a},
\end{equation}
\begin{equation}\label{CondLamdaB}
\lambda_b > {2\mu_b^2\lambda_a \over \mu_a(2\mu_a + 3\lambda_a) - 3\mu_b\lambda_a} ,
\end{equation}
\begin{equation}\label{CondTheta1}
\theta_1 = {-\lambda_b(2\mu_a + \lambda_a)\over 2(\mu_b\lambda_a - \mu_a\lambda_b)},
\end{equation}
\begin{equation}\label{CondAlphaC}
\alpha_c\ge {-D \over C+D},
\end{equation}
\begin{equation}\label{CondMuC}
\mu_c = C {\alpha_c (C+2D) \over D(1 + \alpha_c)},
\end{equation}
and
\begin{equation}\label{CondTheta2}
\theta_2 = {\alpha_c(C+D) \over \alpha_c(C+D) - D(2+\alpha_c)}.
\end{equation}
 Observe that, thanks to the first inequality in \eqref{CondMuB}, the tensor $\LL_1$ given by \eqref{FirstLam} satisfies $\Lambda(\LL_1)\ge0$ (see \cite[Section 4.2]{G01}). Hence, by Theorem \ref{ThNoLossFirstLaminate} the homogenized tensor $\LL_1^*$ is strongly elliptic. This justifies the first lamination from the point of view of homogenization through $\Gamma$-convergence.
 \par
To conclude the first step, let us check that the previous conditions satisfy the assumptions of Theorem \ref{ThRankTwoLossStrEll}.
The tensor $\LL_a$ is strongly elliptic but not semi-very strongly elliptic, {\em i.e.}
\begin{equation*}
\mu_a > 0, \quad 2\mu_a + 3\lambda_a < 0,
\end{equation*} 
which implies that $\mu_b>0$. The fact that necessarily $\lambda_a<0$ together with \eqref{CondMuB} implies that $\lambda_b>0$ thanks to \eqref{CondLamdaB}, and thus $\LL_b$ is very strongly elliptic.
The volume fraction $\theta_1$ clearly belongs to $(0,1)$, since \eqref{CondTheta1} reads~as
\begin{equation*}
\theta_1 = {\lambda_b(2\mu_a + \lambda_a) \over \lambda_b(2\mu_a + \lambda_a) - \lambda_a(2\mu_b + \lambda_b)}.
\end{equation*}
The choice of $\theta_1$ implies that in \eqref{DefABCDE}
\begin{equation}\label{B=0}
B=0.
\end{equation}
In addition, $C+D>0$ as it was proved in \cite[Appendix C]{G01} and $C+2D<0$ by \eqref{CondMuB}, \eqref{CondLamdaB} and \eqref{CondTheta1}. This also implies that $D<0$. Thanks to the previous inequalities we have $\theta_2\in(0,1)$, $\alpha_c>0$ and $\mu_c>0$, which implies that $\LL_c$ is very strongly elliptic.
\par\medskip\noindent
{\it Step 2.} $\Lambda(\LL_2)\ge0$.\label{PropLambda(L2)>=0} 
\par\noindent
To get $\Lambda(\LL_2)\ge0$ we will prove that for
\begin{equation*}
D:=
\begin{pmatrix}
4\mu_c & 0\ & 0\ \\
0 & 0\ & 0\ \\
0 & 0\ & 0\
\end{pmatrix},
\end{equation*}
we have
\begin{equation}\label{LambdaL2>=0}
\LL_2(y_2)M:M + D:\Cof(M)\ge0\quad\mbox{a.e. }y_2\in Y_1,\ \mbox{for all }M\in\mathbb{R}^{N\times N}.
\end{equation}
We need to prove that the previous inequality holds in each homogeneous phase of $\LL_2$.
\par
Firstly, for the phase $\LL_c$ which is isotropic and very strongly elliptic, we get for any $M\in\mathbb{R}^{3\times 3}$,
\begin{align*}
&\ \LL_cM:M + D:\Cof(M) \\
			 = &\  2\mu_c\left[M_{11}^2 + M_{22}^2 + M_{33}^2 + 2\left(M_{12} + M_{21} \over 2 \right)^2 + 2\left(M_{13} + M_{31} \over 2 \right)^2 + 2\left(M_{23} + M_{32} \over 2 \right)^2\right] \\
			 & + \lambda_c(M_{11} + M_{22} + M_{33})^2 + 4\mu_c(M_{22}M_{33} - M_{23}M_{32}) \\
			 = & \ (\lambda_c + 2\mu_c)(M_{11}^2 + M_{22}^2 + M_{33}^2) + 2\lambda_c(M_{11}M_{22} + M_{11}M_{33}) + 2(\lambda_c + 2\mu_c)M_{22}M_{33}  \\
			 & + \mu_c(M_{12}+ M_{21})^2+\mu_c(M_{31}+M_{13})^2+\mu_c(M_{23}-M_{32})^2. 
\end{align*}
This quantity is non-negative for any $M\in\mathbb{R}^{3\times3}$, since the following matrix is positive semi-definite:
\begin{equation*}\label{MatrixPinLc}
\begin{pmatrix}
\lambda_c + 2\mu_c & \lambda_c & \lambda_c \\
\lambda_c & \lambda_c + 2\mu_c & \lambda_c + 2\mu_c \\
\lambda_c & \lambda_c + 2\mu_c & \lambda_c + 2\mu_c
\end{pmatrix},
\end{equation*}
due to the strong ellipticity of $\LL_c$. Therefore, the desired inequality holds for the homogeneous phase $\LL_{c}$.
\par
Secondly, we need to check the same inequality for the phase with $\LL_1^*$. By \eqref{ExpressionOfL1*} we have for $M\in\mathbb{R}^{3\times 3}$,
\begin{equation*}
\begin{aligned}
\LL_1^*M:M + D:\Cof(M) 
				= & \ {1\over A}M_{11}^2 + \left[ {B^2\over A}+2(C+D) \right](M_{22}^2 + M_{33}^2) + 2{B\over A}(M_{11}M_{22} + M_{11}M_{33}) \\
				& +2\left[ {B^2\over A} + 2D + 2\mu_c \right](M_{22}M_{33}) \\
				& + {1\over E}(M_{12}+M_{21})^2+{1\over E}(M_{13}+M_{31})^2 \\
				& + C(M_{23}^2 + M_{32}^2) + 2(C-2\mu_c)M_{23}M_{32}.
\end{aligned}
\end{equation*}
Since $E\ge0$, this quantity is non-negative for any $M\in\mathbb{R}^{3\times 3}$ if the following two matrices are positive semi-definite:
\begin{equation}\label{MatrixPinL1*}
\begin{pmatrix}
{1\over A} & {B\over A} & {B\over A} \\ \\
{B\over A} & {B^2\over A} + 2(C+D) & {B^2\over A} + 2D + 2\mu_c \\ \\
{B\over A} & {B^2\over A} + 2D + 2\mu_c & {B^2\over A} + 2(C+D)
\end{pmatrix},
\end{equation}
\begin{equation}\label{MatrixQ2inL1*}
\begin{pmatrix}
C & C-2\mu_c\\ \\
C-2\mu_c & C
\end{pmatrix}.
\end{equation}
Since $C\ge0$, the matrix \eqref{MatrixQ2inL1*} is positive semi-definite if and only if $\mu_c \le C$.
Taking into account that $\mu_c \le C$, we can check that the matrix \eqref{MatrixPinL1*} is positive semi-definite if  $-(C+2D)\le \mu_c$.
Therefore, the matrices \eqref{MatrixPinL1*} and \eqref{MatrixQ2inL1*} are positive semi-definite if
\begin{equation}\label{Cond-(C+2D)<Mu_c<C}
-(C+2D)\le \mu_c\leq C.
\end{equation}
By the definition \eqref{CondMuC} of $\mu_c$, we deduce that the first inequality of \eqref{Cond-(C+2D)<Mu_c<C} holds if and only if
\begin{equation*}
{\alpha_c C \over -D(1+\alpha_c)}\ge1,
\end{equation*} 
which is satisfied due to inequality \eqref{CondAlphaC}.
For the second inequality of \eqref{Cond-(C+2D)<Mu_c<C}, we need to check that (see \eqref{CondMuC})
\begin{equation*}
{\alpha_c(C+2D)\over D(1+\alpha_c)}\le1,
\end{equation*}
or equivalently,
\begin{equation*}
\alpha_c\ge{D\over C+D}.
\end{equation*}
This is true since $\alpha_c>0$ by \eqref{CondAlphaC} and ${D\over C+D}<0$. Therefore, condition \eqref{LambdaL2>=0} holds true, and consequently
\begin{equation}\label{Lambda(L2)>=0}
\Lambda(\LL_2)\ge0.
\end{equation}
\noindent
{\it Step 3.} $\LL_2$ loses the strong ellipticity through homogenization.
\par\noindent
On the one hand, due to $\Lambda(\LL_2) \ge 0$, by virtue of Theorem \ref{BBResult} the $\Gamma$-convergence \eqref{GammaLimRankTwo} holds with the homogenized tensor $\LL_2^0$ which is given by the minimization formula \eqref{DefL0} replacing $\LL$ by $\LL_2$.
\par
On the other hand, following Guti\'errez' $1^*$-convergence procedure we obtain a homogenized tensor $\LL_2^*$ such that (see \cite[Section 5.2]{G01} for the expression of $\LL_2^*$)
\begin{equation*}
\LL_2^*(e_3\otimes e_3):(e_3\otimes e_3) =  I_1 + {G_1^2\over F_1},
\end{equation*}
where by \eqref{B=0},
\begin{align*}
I_1 &  = 4(1-\theta_2){1+\alpha_c\over 2+\alpha_c} + 2\theta_2C{C+2D\over C+D}, \\
G_1&  = (1-\theta_2){\alpha_c\over 2+\alpha_c} + \theta_2{D\over C+D},\\
F_1&  \neq 0.
\end{align*}
It is not difficult to check that the choice of $\LL_b$, $\LL_c$, $\theta_1$, $\theta_2$ leads to $I_1 = G_1 = 0$, which yields
\begin{equation}\label{LossEllipticityL_2^0Proof}
\LL_2^*(e_3\otimes e_3):(e_3\otimes e_3) = 0.
\end{equation}
\par
To conclude the proof it is enough to show that
\begin{equation}\label{L_2^*=L_2^0}
\LL_2^* = \LL_2^0.
\end{equation}
Indeed, thanks to $\LL_2^* = \LL_2^0$ equality \eqref{LossEllipticityL_2^0Proof} implies the loss of ellipticity \eqref{LossOfEllipticityRankTwo}, and \eqref{LossOfEllipticityRankTwo} implies $\Lambda(\LL_2)\le0$. This combined with \eqref{Lambda(L2)>=0} finally shows the desired lost of functional coercivity \eqref{Lambda(L2)=0}.
\par\medskip\noindent
{\it Step 4.} $\LL_2^* = \LL_2^0$.
\par\noindent
By formally using $1^*$-convergence in terms of \cite[Lemma 3.1]{BF01}, Guti\'errez's computations for the tensor $\LL_2^*$ in \cite[Section 5.2]{G01} can be written as
\begin{equation}\label{FormulaeForL_2^*}
\left\{\begin{aligned}
&A^{-1}[\LL_2^*] = \int_0^1 A^{-1}[\LL_2](t)\, dt,\\
&A^{-1}_{im}[\LL_2^*](\LL_2^*)_{2mkl}=\int_{0}^1\big( A^{-1}_{im}[\LL_2](t)(\LL_2)_{2mkl}(t) \big)\,dt,\\
&(\LL_2^*)_{ijkl} - (\LL_2^*)_{ij2m}A^{-1}_{mn}[\LL_2^*](\LL_2^*)_{2nkl} = \int_{0}^1\big( (\LL_2)_{ijkl}(t) - (\LL_2)_{ij2m}(t)A^{-1}_{mn}[\LL_2](t)(\LL_2)_{2nkl}(t) \big)\,dt,
\end{aligned}
\right.
\end{equation}
where in the present context, for any $\LL\in L^\infty_{\per}(Y_1;\mathscr{L}_s(\mathbb{R}^{3\times 3}))$, $A[\LL]\in L^\infty_{\per}(Y_1;\mathbb{R}^{3\times 3}_s)$ is defined by 
\begin{equation*}
A[\LL](y_2)\xi := [\LL(y_2)(\xi\otimes e_2)]e_2\quad\mbox{for }y_2\in Y_1\mbox{ and }\xi\in \mathbb{R}^{3}.
\end{equation*}
By focusing on the first equality of \eqref{FormulaeForL_2^*} we have
\begin{equation}\label{FirstEqForL_2^*}
A^{-1}[\LL_2^*] = \int_0^1 A^{-1}[\LL_2](t)\, dt = \theta_2 A^{-1}[\LL_1^*] + (1-\theta_2) A^{-1}[\LL_c],
\end{equation}
where all the quantities are finite. Now, similarly to the proof of Theorem \ref{BBResult} we consider the perturbation of $\LL_2$ defined by
\begin{equation}
\LL_\delta:=	\LL_2 + \delta\,\mathbb{I}_s\quad\mbox{for }\delta>0.
\end{equation}
On the one hand, due to $\Lambda(\LL_\delta)>0$ (which by \eqref{inecoer} implies $0<\Lambda_{\per}(\LL_\delta)\le\alpha_{\se}(\LL_\delta)$), thanks to \cite[Lemma 3.2]{BF01} the $1^*$-limit $\LL_\delta^*$ of $\LL_\delta$ and the homogenized tensor $\LL_\delta^0$ of $\LL_\delta$ defined by \eqref{DefL0} agree.
Then, applying \cite[Lemma 3.1]{BF01} with $\LL_\delta$ we get that
\begin{equation}\label{FirstEqForL_delta^*}
A^{-1}[\LL_\delta^*] = \int_0^1 A^{-1}[\LL_\delta](t) dt = \theta_2 A^{-1}[\LL_1^*+\delta\,\mathbb{I}_{s}] + (1-\theta_2) A^{-1}[\LL_c+\delta\,\mathbb{I}_{s}].
\end{equation}
Observe that we have
\begin{equation*}
\begin{aligned}
A[\LL_1^* + \delta\,\mathbb{I}_s] & \ge A[\LL_1^*], \\
A[\LL_1^* + \delta\,\mathbb{I}_s] & \to A[\LL_1^*]\quad \mbox{as }\delta\to0, 
\end{aligned}
\end{equation*}
where the previous inequality must be understood in the sense of the quadratic forms.
This combined with the fact that both $\LL_1^* + \delta\,\mathbb{I}_s$ and $\LL_1^*$ are strongly elliptic tensors (which implies that the previous matrices are positive definite), yields
\begin{equation*}
A^{-1}[\LL_1^* + \delta\,\mathbb{I}_s] \le A^{-1}[\LL_1^*],
\end{equation*}
and thus, 
\begin{equation*}
A^{-1}[\LL_1^* + \delta\,\mathbb{I}_s]  \to A^{-1}[\LL_1^*]\quad \mbox{as }\delta\to0. 
\end{equation*}
Similarly, we have
\begin{equation*}
A^{-1}[\LL_c + \delta\,\mathbb{I}_s]  \to A^{-1}[\LL_c]\quad \mbox{as }\delta\to0. 
\end{equation*}
Hence, from the two previous convergences and taking into account \eqref{FirstEqForL_2^*}, \eqref{FirstEqForL_delta^*}, we deduce that
\begin{equation*}
A^{-1}[\LL_\delta^*]\to A^{-1}[\LL_2^*]\quad\mbox{as }\delta\to0.
\end{equation*}
On the other hand, following the proof of Theorem \ref{BBResult} we have
\[
\LL_\delta^* = \LL_\delta^0\to\LL_2^0\quad \mbox{as }\delta\to0.
\]
Therefore, we obtain the equality
\begin{equation}\label{FirstEqForL_delta^*andL_delta^0}
A^{-1}[\LL_2^0]= A^{-1}[\LL_2^*].
\end{equation}
Using similar arguments, we can prove that $\LL_2^0$ and $\LL_2^*$ satisfy for any $i,j,k,l\in\{1,2,3\}$,
\begin{align}
 A^{-1}_{im}[\LL_2^*](\LL_2^*)_{2mkl} & = A^{-1}_{im}[\LL_2^0](\LL_2^{0})_{2mkl}\label{SecondEqForL_delta^*andL_delta^0},\\
 (\LL_2^*)_{ijkl} - (\LL_2^*)_{ij2m}A^{-1}_{mn}[\LL_2^*](\LL_2^*)_{2nkl} & = (\LL_2^0)_{ijkl} - (\LL_2^0)_{ij2m}A^{-1}_{mn}[\LL_2^0](\LL_2^0)_{2nkl}. \label{ThirdEqForL_delta^*andL_delta^0}
\end{align}
Since the set of equalities \eqref{FormulaeForL_2^*} completely determine the tensor $\LL_2^*$,
equalities \eqref{FirstEqForL_delta^*andL_delta^0}, \eqref{SecondEqForL_delta^*andL_delta^0}, \eqref{ThirdEqForL_delta^*andL_delta^0} thus imply the desired equality \eqref{L_2^*=L_2^0}, which concludes the proof.
\end{proof}

\section*{Appendix}

\begin{proof}[Proof of Theorem \ref{LamdaPer>0}]
We simply adapt the proof of \cite[Theorem 2.2]{BF01} to dimension 3.
\par
Firstly, let us prove the first part of the theorem, {\em i.e.} $\Lambda(\LL)\ge0$.
The quasi-affinity of the cofactors (see \cite{Dac}) reads as
\begin{equation}\label{null-Lagr}
\int_{Y_3} \adj_{ii}(\nabla v)\,dy = 0,\quad \forall\, v\in C_c^\infty (\mathbb{R}^3;\mathbb{R}^3),\  \forall\,i\in\{1,2,3\}.
\end{equation}
As a consequence, for any $d\in\mathbb{R}$, the definition of $\Lambda(\LL)$ can be rewritten as
\begin{equation*}
\Lambda (\LL) = \inf \left\{ \int_{\mathbb{R}^3} \left[ \LL e(v) : e(v) + d \sum_{i=1}^3 \adj_{ii}(\nabla v)\right]dy
,\  {v\in C_c^\infty (\mathbb{R}^3;\mathbb{R}^3)} \right\}.
\end{equation*}
If we compute the integrand in the previous infimum, we obtain
\begin{equation}\label{LambdaLRewritten}
\Lambda (\LL) = \inf \left\{ \int_{\mathbb{R}^3} \left[ P(y;\partial_1v_1,\partial_2v_2,\partial_3v_3)+ Q(y;\partial_3v_2,\partial_2v_3) + Q(y;\partial_3v_1,\partial_1v_3) + Q(\partial_2v_1,\partial_1v_2)\right] dy,\  {v\in C_c^\infty (\mathbb{R}^3;\mathbb{R}^3)}\right\},
\end{equation}
where
\[
\left\{
\begin{array}{l}
	P(y;a,b,c) := 
	\begin{pmatrix}
	a & b & c
	\end{pmatrix}
	\begin{pmatrix}
	\lambda + 2\mu & \lambda + \frac{d}{2} & \lambda + \frac{d}{2} \\
	\lambda + \frac{d}{2} & \lambda + 2\mu & \lambda + \frac{d}{2} \\
	\lambda + \frac{d}{2} & \lambda + \frac{d}{2} & \lambda + 2\mu
	\end{pmatrix}
	\begin{pmatrix}
	a \\ b \\c
	\end{pmatrix},
	\\ \\
	Q(y;a,b) := 
	\begin{pmatrix}
	a & b
	\end{pmatrix}
	\begin{pmatrix}
	\mu & \mu-\frac{d}{2} \\
	\mu-\frac{d}{2} & \mu
	\end{pmatrix}
	\begin{pmatrix}
	a \\ b
	\end{pmatrix}.
\end{array}
\right.
\]
We can check that condition \eqref{HipExConstd} with $d\geq 0$ implies that the quadratic forms $P$ and $Q$ are non negative.
Hence, the integrand in \eqref{LambdaLRewritten} is pointwisely non-negative, and thus $\Lambda(\LL)\ge0$.
\par
Now, let us prove that $\Lambda_{\per}(\LL)> 0$. By the definition of $\Lambda_{\per}(\LL)$ and using the same argument as before, we have
\begin{equation*}
\Lambda_{\per} (\LL) = \inf \left\{ \int_{Y_3} \left[ \LL e(v) : e(v) + d \sum_{i}\adj_{ii}(\nabla v)
\right]dy, \ v\in H^1_{\per}(Y_3;\mathbb{R}^3),\int_{Y_3}|\nabla v|^2\, dy = 1
 \right\}.
\end{equation*}
Similar computations lead to
\begin{equation}\label{LambdaPerEq}
\Lambda_{\per} (\LL) = \inf \left\{ 
\int_{Y_3 }
\big[ P(y;\partial_1v_1,\partial_2v_2,\partial_3v_3)+ Q(y;\partial_3v_2,\partial_2v_3) + Q(y;\partial_3v_1,\partial_1v_3) + Q(\partial_2v_1,\partial_1v_2) \big] dy 
\right\}.
\end{equation}
Take $y\in Z_i, i\in I$. Then, using that $4\mu_i = d$, we have
\begin{equation*}
P(y;a,b,c) = (\lambda_i + 2\mu_i)(a+b+c)^2 \geq 0,
\end{equation*}
and
\begin{equation*}
Q(y;a,b) = \mu_i(a-b)^2 \geq 0.
\end{equation*}
For $y\in Z_j, j\in J$, using that $2\mu_j + 3\lambda_j = -d$, we get
\begin{equation*}
P(y;a,b,c) = \left(\mu_j + \frac{\lambda_j}{2}\right)\big[(a-b)^2+(a-c)^2+(b-c)^2\big] \geq 0,
\end{equation*}
and
\begin{equation*}
Q(y;a,b) = d\left(\mu_j + \frac{d}{4}\right)\geq  0.
\end{equation*}
Finally, for $y\in Z_k$, $k\in K$, since $-(2\mu_k+3\lambda_k) < d < 4\mu_k$, it is easy to see that the quadratic forms $P$ and $Q$ are positive semi-definite.
Therefore, we have just proved that there exists $\alpha>0$ such that
\begin{align}
&P(y;a,b,c) \ge \alpha(a+b+c)^2,\ Q(y;a,b) \ge \alpha(a-b)^2,\  y\in Z_i,i\in I,\label{ineqInI}\\
&P(y;a,b,c) \ge \alpha[(a-b)^2 + (a-c)^2 + (b-c)^2],\ Q(y;a,b) \ge \alpha(a^2+b^2),\ y\in Z_j,j\in J,\label{ineqInJ}\\
&P(y;a,b,c) \ge \alpha(a^2 + b^2 + c^2),\ Q(y;a,b) \ge \alpha(a^2 + b^2),\ y\in Z_k,k\in K, \label{ineqInK}
\end{align}
which implies that $\Lambda_{\per}(\LL) \geq 0$.
\par
Assume by contradiction that $\Lambda_{\per}(\LL)=0$. In this case there exists a sequence $v^n\in H^1_{\per}(Y_3;\mathbb{R}^3)$ with 
\[
\int_{Y_3} v^n \, dy = 0,
\]
such that
\begin{equation}\label{hipRedAbs}
\int_{Y_3}|\nabla v^n|^2 \, dy = 1,\quad \forall\, n\in\mathbb{N},
\end{equation}
together with 
\begin{equation*}
\int_{Y_3}\LL(y)e(v^n) : e(v^n) \, dy \to 0.
\end{equation*}
By the Poincar\'e-Wirtinger inequality $v^n$ is bounded in $L^2(Y_3;\mathbb{R}^3)$. Moreover, by \eqref{LambdaPerEq} we have
\begin{equation}\label{tendsToZeroY3}
\int_{Y_3}\left[ P(y;\partial_1 v_1^n,\partial_2 v_2^n,\partial_3 v_3^n) + \sum_{i<j} Q(y;\partial_j v_i^n,\partial_i v_j^n) \right]dy \to 0.
\end{equation}

Take $k\in K$. Using \eqref{ineqInK} we get 
\[
\int_{Z_k}\left[ P(y;\partial_1 v_1^n,\partial_2 v_2^n,\partial_3 v_3^n) + \sum_{i<j} Q(y;\partial_j v_i^n,\partial_i v_j^n) \right]dy \ge \alpha\int_{Z_k} |\nabla v^n|^2\,dy.
\]
Then, using \eqref{tendsToZeroY3} and the fact that both $P$ and $Q$ are non negative, it follows that
\begin{equation*}
\int_{Z_k} |\nabla v^n|^2\,dy \to 0\quad\forall\, k\in K,
\end{equation*}
and therefore
\begin{equation}\label{lim0inK}
 \lim_{n\to\infty} \sum_{k\in K}\int_{Z_k}\sum_{q,r=1,2,3} (\partial_r v_q^n)^2 \, dy = 0.
\end{equation}
Next, take $j\in J$. By \eqref{ineqInJ} we obtain
\[
	\int_{Z_j}\left[ P(y;\partial_1 v_1^n,\partial_2 v_2^n,\partial_3 v_3^n) + \sum_{i<k} Q(y;\partial_k v_i^n,\partial_i v_k^n) \right]dy \ge \alpha\int_{Z_j} \sum_{i<k} \left[ (\partial_i v_i^n - \partial_k v_k^n)^2 + (\partial_k v_i^n)^2 + (\partial_i v_k^n)^2 \right]dy.
\]
Again using \eqref{tendsToZeroY3} and the non-negativity of $P$ and $Q$ we get
\begin{equation}\label{tendsToZeroZj}
 \lim_{n\to\infty} \int_{Z_j} \left[ (\partial_i v_i^n - \partial_k v_k^n)^2 + (\partial_k v_i^n)^2 + (\partial_i v_k^n)^2 \right] = 0
 \quad\mbox{for }i,k\in\{1,2,3\},\ i<k.
\end{equation}
From \eqref{tendsToZeroZj} and the continuity of the operator $\partial_1 : L^2(Z_j) \to H^{-1}(Z_j)$ we deduce that
\begin{equation}\label{dd1v1convH-1}
\left\{
\begin{aligned}
\partial_2(\partial_1 v_1^n)=\partial_1(\partial_2 v_1^n) &\to 0\quad\mbox{strongly in }H^{-1}(Z_j),\\
\partial_1(\partial_1 v_1^n)=\partial_1(\partial_1 v_1^n-\partial_2 v_2^n)+\partial_2(\partial_1 v_2^n) &\to 0\quad\mbox{strongly in }H^{-1}(Z_j).
\end{aligned}
\right.
\end{equation}
By \eqref{hipRedAbs} we also have
\begin{equation}\label{d1v1acotL2}
\partial_1 v_1^n \mbox{ is bounded in }L^2(Z_j).
\end{equation}
However, thanks to Korn's Lemma (see, {\em e.g.}, \cite{Necas}) the following norms are equivalent in $L^2(Z_j)$:
	\begin{equation*}
	\left\{
	\begin{aligned}
	&\|\nabla \cdot \|_{H^{-1}(Z_j;\mathbb{R}^3)} + \| \cdot \|_{H^{-1}(Z_j)},\\
	&\| \cdot \|_{L^2(Z_j)}.
	\end{aligned}
	\right.
	\end{equation*}
Hence, from estimates \eqref{dd1v1convH-1}, \eqref{d1v1acotL2} and the compact embedding of $L^2$ into $H^{-1}$, it follows that 
\[
\partial_1 v_1^n \mbox{ is strongly convergent in } L^2(Z_j).
\]
Furthermore, by \eqref{dd1v1convH-1} and the fact that $Z_j$ is connected for all $j$, there exists $c_j\in \mathbb{R}$ such that 
\[
\partial_1 v_1^n \to c_j \mbox{ strongly in $L^2(Z_j)$},
\]
which combined with \eqref{tendsToZeroZj} yields
\begin{equation*}
\nabla v^n \to c_j I_3\mbox{ strongly in }L^2(Z_j)^3.
\end{equation*}
Since $v^n$ is bounded in $L^2(Y_3;\mathbb{R}^3)$, we can conclude that there exists $V_j\in \mathbb{R}^3$ such that
\begin{equation}
v^n\to v:=c_j y + V_j\quad\mbox{strongly in }H^{-1}(Z_j;\mathbb{R}^3).
\end{equation}
\par
In Case 1, by the periodicity of the limit $c_j y + V_j$ it is necessary to have $c_j=0$.
\par
In Case 2, since $Z_k$ is connected, by \eqref{lim0inK} there exists a constant $c_k$ such that $v_n$ converges to $\chi_{Z_j}v + \chi_{Z_k}c_k$ strongly in $H^1(Z_j\cup Z_k)$. Hence, since the sets $Z_j$ and $Z_k$ are regular, the trace of $v$ must be equal to $c_k$ a.e. on $\partial Z_j \cap \partial Z_k$. Therefore, the only way for $c_j y + V_j$ to remain constant on a set of non-null $\H^2$-measure is to have $c_j = 0$.
\par
In both cases this implies that $\nabla v^n$ converges strongly to $0$ in $L^2(Z_j;\mathbb{R}^{3\times 3})$, and thus
\begin{equation}\label{lim0inJ}
\lim_{n\to\infty} \sum_{j\in J}\int_{Z_j}\sum_{r,q=1,2,3} (\partial_q v_r^n)^2 dy = 0.
\end{equation}

Finally, take $i\in I$. By \eqref{ineqInI} we have
\begin{multline*}
\int_{Z_i}\left[ P(y;\partial_1 v_1^n,\partial_2 v_2^n,\partial_3 v_3^n) + \sum_{r<q} Q(y;\partial_q v_r^n,\partial_r v_q^n) \right]dy \ge \\ 
\alpha \int_{Z_i} \left[ 
	(\partial_1 v_1^n + \partial_2 v_2^n + \partial_3 v_3^n)^2 + (\partial_2 v_1^n + \partial_1 v_2^n)^2 + (\partial_3 v_1^n + \partial_1 v_3^n)^2 + (\partial_3 v_2^n + \partial_2 v_3^n)^2
 \right]dy.
\end{multline*}
By virtue of \eqref{tendsToZeroY3} we also have
\begin{equation}\label{tendsToZeroZi}
\int_{Z_i} \left[ 
	(\partial_1 v_1^n + \partial_2 v_2^n + \partial_3 v_3^n)^2 + (\partial_2 v_1^n + \partial_1 v_2^n)^2 + (\partial_3 v_1^n + \partial_1 v_3^n)^2 + (\partial_3 v_2^n + \partial_2 v_3^n)^2
 \right]dy \to 0\quad\mbox{as } n\to\infty.
\end{equation}
Limits \eqref{lim0inJ}, \eqref{lim0inK} combined with \eqref{null-Lagr} yield
\begin{equation*}
\lim_{n\to\infty} \sum_{i\in I} \int_{Z_i} \sum_{r=1}^3 \adj_{rr}(\nabla v^n)dy = 0.
\end{equation*}
Therefore, upon subtracting this quantity to the sum over $i\in I$ of \eqref{tendsToZeroZi} we conclude that
\begin{equation}\label{lim0inI}
\lim_{n\to\infty} \sum_{i\in I} \int_{Z_i} \sum_{r,q = 1}^3 (\partial_q v_r^n)^2 dy = 0.
\end{equation}
Finally, limits \eqref{lim0inJ}, \eqref{lim0inK} and \eqref{lim0inI} contradict condition \eqref{hipRedAbs}. The proof is thus complete.
\end{proof}

\par\bigskip\noindent
{\bf Acknowledgments.} The authors wish to thank A. Braides for the helpful Theorem~\ref{BBResult}.
\par\noindent
This work has been partially supported by the project MTM2011-24457 of the  ``Ministerio de Econom\'{\i}a y Competitividad" of Spain, and the project FQM-309 of the ``Junta de Andaluc\'{\i}a". A.P.-M. is a fellow of ``Programa de FPI del Ministerio de Econom\'ia y Competitividad" reference BES-2012-055158. A.P.-M. is grateful to the {\em Institut de Math\'ematiques Appliqu\'ees de Rennes} for its hospitality, where this work was carried out during his stay March 2 - June 29 2015.

\end{document}